\newtheorem{theorem}{Theorem}[section]
\newtheorem{corollary}[theorem]{Corollary}
\theoremstyle{definition}
\theoremstyle{remark}
\newtheorem{remark}[theorem]{Remark}
\newcommand{\N}{{\mathbb N}}
\newcommand{\MM}{\mathbb{M}}
\newcommand{\HH}{\mathbb{H}}
\newcommand{\RR}{\mathbb{R}}
\newcommand{\SPH}{\mathbb{S}}
\newcommand{\dd}{\mathrm{d}}
\date{\today}
\begin{document}

\sloppy
\title[Wicksell's corpuscle problem]{Wicksell's corpuscle problem\\ in spaces of constant curvature}

\author{Panagiotis Spanos}
\address{Panagiotis Spanos, Ruhr University Bochum, Faculty of Mathematics, Bochum, Germany}
\email{panagiotis.spanos\@@{}rub.de} 

\author{Christoph Th\"ale}
\address{Christoph Th\"ale, Ruhr University Bochum, Faculty of Mathematics, Bochum, Germany}
\email{christoph.thaele\@@{}rub.de} 

\date{}

\begin{abstract}
In this paper, we study Wicksell’s corpuscle problem in spaces of constant curvature, thus extending the classical Euclidean framework. We consider a particle process of balls with random radii in such a space, assumed to be invariant under the action of the full isometry group. We refer to the process of intersections of these balls with a fixed totally geodesic hypersurface as the induced process. We first derive a section formula expressing the distribution of the induced process in terms of that of the original process. Conversely, assuming that the distribution of the induced process is known, we establish an inversion formula that recovers the distribution of the original process. Finally, we show that in the limit as the curvature of the space tends to zero, our section and inversion formulas converge to the classical Euclidean results.
\end{abstract}

\keywords{Constant curvature space, geometric probability, hyperbolic geometry, random ball process, spherical geometry, stochastic geometry, stereology, Wicksell's corpuscle problem}
\subjclass[2020]{52A22, 60D05}
\thanks{This work has been supported by the German Research Foundation (DFG) via SPP 2265 Random Geometric Systems.}
\maketitle

\baselineskip=16pt

\section{Introduction}

Wicksell’s corpuscle problem, first formulated in 1925 by Wicksell~\cite{Wicksell1,Wicksell2},
asks how to reconstruct the radius distribution of a population of three-dimensional spherical particles from the observed distribution of radii of their {planar} circular
cross-sections.  In stereology this inverse task is known as the
{unfolding process}.  Although originally motivated by biological
questions, e.g.\ quantifying tissue structure in humans and other animals, the same mathematical
framework applies to any situation where only lower-dimensional slices of a
higher-dimensional structure are observable. Over the past century the problem has been analysed and extended in many directions by various
authors, see for example~\cite{Mecke1980,Dress1992,Ohser2001,Groeneboom1995,Gili2024}. For a survey on the topic
we refer to~\cite{Mecke1987}. Modern treatments typically
express the unfolding step as an Abel-type integral equation whose kernel
encodes the stereological geometry, and then solve it by regularised
deconvolution or non-parametric maximum likelihood estimation.  Recent work also explores Bayesian formulations that naturally incorporate measurement error and prior shape constraints.

Zähle~\cite{zahle} made an early attempt to extend Wicksell's corpuscle problem to
space of positive curvature, that is, to spherical spaces.  While the underlying ideas are conceptually sound, the
derivation contains an inaccuracy that leads to oversimplified and ultimately incorrect results, {see Remark \ref{rem:zahlecorrection} for a detailed explanation.}  In the present paper, we provide a fully rigorous formulation and
solution of Wicksell’s problem in all spaces of constant curvature, encompassing both
the {spherical} \textit{and} {hyperbolic} spaces simultaneously.  Moreover, we show that in the
limit as the curvature approaches zero, our generalized integral equations reduce
precisely to the classical Euclidean formulas. We thus provide the first complete treatment of Wicksell’s problem in both positively and negatively curved spaces, where curvature affects observable quantities in non-trivial ways. We also mention here that understanding Wicksell’s problem in curved spaces might have potential relevance in contexts where the ambient space is inherently non-Euclidean, such as in the modelling of biological tissues on curved surfaces, spatial networks embedded in hyperbolic space, or in imaging techniques that involve geodesic slices through non-flat structures.

The study of stochastic geometry models in spaces of constant curvature has recently attracted significant mathematical interest. In particular, models considered in hyperbolic space have attracted growing attention, as such settings can at times give rise to richer structural phenomena and novel asymptotic behaviours absent in the Euclidean case. For example, Boolean models, that is coverage processes formed by placing random “grains” at the points of a Poisson point process, have been formulated in hyperbolic space \cite{Bool1,Bool2,Bool3}.
Similarly, Poisson cylinder models, which consist of random systems of geodesic cylinders in spaces of constant negative curvature, exhibit intersection patterns and percolation properties that differ from those in the Euclidean setting \cite{Cyl1,Cyl2}. A range of other stochastic geometry models, including Poisson–Voronoi tessellations \cite{PV1,PV2}, their low-intensity limit \cite{AchilleEtAl}, Poisson hyperplane tessellations \cite{Hyp1,Hyp2,Hyp3,Hyp4}, random subspaces \cite{Hyp1,RSS}, percolation models \cite{perc}, random geometric graphs, complex networks and random connection models \cite{RGG1,RGG2,RGG3,RCM1,RCM2,RCM3}, have also been studied in both hyperbolic and spherical geometries. {In continuation of this line of research within stochastic geometry, we provide the solution of the corpuscle problem in the setting of such curved spaces.}

{The remainder of this paper is organized as follows. In Section \ref{sec:SetUpNotaiton}, we introduce the geometric framework and the stochastic process we will be working with. In Section \ref{sec:prelim}, we provide geometric formulas unified across all curvatures. In Section \ref{sec:form}, we derive the section formulas for the induced process in curved spaces, whereas in Section \ref{sec:inver}, we present the corresponding inversion formulas that solve Wicksell’s problem simultaneously in all constant curvature spaces. Finally, in Section \ref{sec:asympt}, we show that the formulas we obtain for curved spaces converge to those in the classical Euclidean case as the curvature tends to zero.}

\section{Set up and notation}\label{sec:SetUpNotaiton}

Let $\MM_k^d$ be the $d$-dimensional standard space of constant sectional curvature $k\in(-\infty,\infty)$. More precisely, for $k=0$ this space is the usual $d$-dimensional Euclidean space $\mathbb{R}^d$, for $k=1$ we obtain the $d$-dimensional sphere of radius $1$ and for $k=-1$ the $d$-dimensional hyperbolic space $\HH^d$. The space $\MM_k^d$ can be regarded as a Riemannian manifold, endowed with a Riemannian metric $\dd_k(\,\cdot\,,\,\cdot\,)$ and volume $\mu_{k,d}$. Specifically:
\begin{itemize}
\item \textit{Euclidean space:} If $k=0$ then $\MM_0^d=\RR^d$, $d_0(x,y):=\|x-y\|$ for $x,y\in\RR^d$ is the Euclidean distance and $\mu_{0,d}$ the $d$-dimensional Lebesgue measure.

\item \textit{Spherical spaces:} If $k>0$ then we can take for $\MM_k^d$ the $d$-dimensional sphere $$\SPH_k^d:=\{ x=(x_1,\ldots,x_{d+1}) \in \RR^{d+1} : x_1^2+\ldots+x_{d+1}^2=k^{-1} \} \subset\RR^{d+1},$$ which we think of being embedded into the Euclidean space $\RR^{d+1}$. For $\dd_k$ we take the  geodesic distance $$\dd_k(x,y):=k^{-1}\arccos(\langle x,y\rangle),\qquad x,y\in\SPH_k^d,$$ with $\langle\,\cdot\,,\,\cdot\,\rangle$ being the standard scalar product in $\RR^{d+1}$, finally $\mu_{k,d}$ is the usual spherical Lebesgue measure on $\SPH_k^d$.

\item \textit{Hyperbolic spaces:} If $k<0$ we let $$\MM_k^d=\HH_k^d:=\{x=(x_1,\ldots,x_{d+1})\in\RR^{d+1}:x_1^2+\ldots+x_d^2-x_{d+1}^2=k^{-1}, {x_{d+1}>0} \}\subset\RR^{d+1}$$ be a single-sheet hyperboloid in $\RR^{d+1}$. For $\dd_k$ we take the function $$\dd_k(x,y):=(-k)^{-1}{\rm arcosh}([x,y]),\qquad x,y\in\HH_k^d,$$ with the Lorenzian scalar product $[x,y]:=x_1y_1+\ldots+x_dy_d-x_{d+1}y_{d+1}$, $x=(x_1,\ldots,x_{d+1}),y=(y_1,\ldots,y_{d+1})\in\HH_k^d$. Moreover, we let $\mu_{k,d}$ be the Hausdorff measure on $\HH_k^d$ induced by the metric $\dd_k$.
\end{itemize}

Next, we let $\eta_d$ be a stationary point process on $\MM_k^d$ with intensity measure $N_d\mu_{k,d}$, where $N_d\in(0,\infty)$. Here, stationarity of $\eta_d$ refers to the property that the distribution of $\eta_d$ is invariant under all isometries of the space $\MM_k^d$.
Furthermore, we fix a probability measure $\nu_d$ on the positive half-axis $(0,\infty)$ and let $\xi_d$ be an independent $\nu_d$-marking of $\eta_d$. This means that each point of $\eta_d$ gets marked by an independent copy of a random variable with distribution $\nu_d$. Each point of the marked point process $\xi_d$ is a pair $(x,R)\in \MM_k^d\times (0,\infty)$, which geometrically corresponds to a random ball with centre $x\in\MM_k^d$ and radius $R\in(0,\infty)$. This way a random process of balls is created by $\xi_d$, which is also invariant under the isometries of the space $\MM_k^d$. The intensity measure of the random process $\xi_d$ is given by the product measure $$\Lambda_d:=N_d\,\mu_{k,d} \otimes \nu_d.$$

To proceed, consider a fixed $(d-1)$-dimensional totally geodesic submanifold $L\subset \MM_k^d$ and consider the intersection of the random ball process with $L$. In our model spaces $L$ is a usual affine hyperplane in $\RR^d$ if $k=0$, and otherwise $L$ can be realized as the intersection of $\SPH_k^d\subset\RR^{d+1}$ or $\HH_k^d\subset\RR^{d+1}$ with a linear subspace of $\RR^{d+1}$ of dimension $d$. Since any isometry of $\MM_k^d$ that fixes $L$ can be extended to the whole of $\MM^d_k$ and the random ball process we consider is invariant under all isometries of $\MM_k^d$, the induced random process of balls within $L$ is invariant under all isometries of $\MM_k^d$ that fix $L$. Therefore, the intersection of the random ball process in $\MM_k^d$ with $L$ induces a random ball process within $L$, which in turn can be described by a marked point process $\xi_{d-1}$ on $L$ with marks in $(0,\infty)$ whose intensity measure can be written as $$\Lambda_{d-1}=N_{d-1}\,\mu_{k,L}\otimes\nu_{d-1}.$$ Here, $N_{d-1}\in(0,\infty)$, $\mu_{k,L}$ is the canonical volume measure on $L$ (which may be identified with $\mu_{k,d-1}$) and $\nu_{d-1}$ is some probability measure on $(0,\infty)$.

\medspace

In the present paper we consider the following quantities and the relations between them:
\begin{itemize}
\item The intensity $N_d$, which has an interpretation as the mean number of ball-centres per unit $\mu_{k,d}$-volume in $\MM_k^d$.
\item The intensity $N_{d-1}$, which is the mean number of ball-centres per unit $\mu_{k,L}$-volume of the induced process within $L$.
\item The distribution function $D_d(r):=\nu_d((0,r])$ of the radii and the mean radius $\rho_d:=\int_0^\infty r\,\nu_d(\dd r)$ of the original random ball process in $\MM_k^d$.
\item The distribution function $D_{d-1}(r):=\nu_{d-1}((0,r])$ of the radii and the mean radius $\rho_{d-1}:=\int_0^\infty r\,\nu_{d-1}(\dd r)$ of the induced process within $L$.
\end{itemize}

We are interested in expressing the the probability measure $\nu_{d-1}$ through $\nu_d$ (intersection formula), and vice versa (inversion formula).

\section{Preliminaries}\label{sec:prelim}

In the proofs of our results it will turn out to be crucial to change coordinates of the space $\MM_k^d$, where $k\in\RR$. For this, we fix, as in the previous section, a totally geodesic hypersurface $L\subset\MM_k^d$. Then, each point $x\in\MM_k^d$ can be parametrized by its orthogonal projection $y:=p_L(x)$ onto $L$ and its geodesic distance  $\vert h \vert:=\mathrm{dist} ( x, L)$ from $L$, and a sign depending on its orientation. This parametrization is also called \textit{equidistant decomposition}, see \cite[Chapter 3.1]{Solodovnikov}. Through this diffeomorphism $x\mapsto\big(p_L(x),\mathrm{dist}(x,L)\big)$ the volume element $\mu_d(\dd x)$ can be expressed as
\begin{align}\label{eqdeco}
\mu_d(\dd x) = \cos^{d-1}(\sqrt{k} h) \, \dd h \,  \mu_{L}(\dd y),
\end{align}
where $\mu_{L}(\dd y)$ is the Riemannian volume element on $L$, see the table on p.\ 90 of \cite{Solodovnikov}. {Here and in what follows we use the convention that
\begin{align}\label{eq:CosConvention}
\cos(\sqrt{k}h) = \begin{cases}
	\cos(\sqrt{k}h)\quad & \text{ if } k>0\\
	1\quad & \text{ if } k=0\\
	\cosh(\sqrt{-k}h)\quad & \text{ if } k<0.\\
\end{cases}
\end{align}}
We note that if $k>0$, then $h$ can only take values in the bounded interval $[-\frac{\pi}{2\sqrt{k}},\frac{\pi}{2\sqrt{k}}]$, whereas $h\in\RR$ for all other cases. In order to provide formulas for all curvature parameters simultaneously we will encode this behaviour by defining
\[l_k := 
\begin{cases}
\frac{\pi}{2 \sqrt{k}} \quad & \text{ if } k>0
\\
\infty \quad & \text{ if } k\leq 0.
\end{cases}
\]
In this paper we will often encounter intervals like $(r,l_k]$ or $[r,l_k]$ for $r\geq 0$. To deal with all curvatures simultaneously, we will interpret these as
$$
(r,l_k] := \begin{cases}
\{x\in\RR:r<x\leq l_k\}\quad & \text{ if } k>0\\
\{x\in\RR:r<x< l_k\} \quad & \text{ if } k\leq 0,
\end{cases}\qquad
[r,l_k] := \begin{cases}
	\{x\in\RR:r\leq x\leq l_k\} \quad & \text{ if } k>0\\
	\{x\in\RR:r\leq x < l_k\} \quad & \text{ if } k\leq 0.
\end{cases}
$$

Using our convention \eqref{eq:CosConvention}, we {formulate the \textit{Pythagorean theorem for spaces of constant curvature $k$}. It says that
\begin{align}\label{lawcos}
\cos\left( \sqrt{k} \dd_{k}(A,C) \right) = \cos \left( \sqrt{k}  \dd_k(A,B) \right) \cos\left( \sqrt{k}  \dd_k (B,C)\right),
\end{align}
for any three points $A,B,C\in\MM_k^d$ which are such that the geodesics $AB$ and $BC$ meet at a right angle.}

For $k>0$, $\MM_k^d$ is a sphere of radius $R=k^{-\frac{ 1}{2}}$. The formula arises from the \textit{spherical law of cosines} for a sphere of radius $R$. Let $O$ be the centre of this sphere and let $A, B, C$ be three points such that the angle between the arcs $AB$ and $BC$ is a right angle. If $a,b$ and $c$ are the angles of $OA$ with $OB$, $OB$ with $OC$ and $OA$ with $OC$, respectively, then 
$
\cos   a = \cos   b \cos   c
$.
In spherical distances this translates to \ref{lawcos}. 

For $k<0$, where $\MM_k^d$ is a scaled version of the standard hyperbolic space of constant curvature $-1$, there exists a corresponding relation called the \textit{first law of cosines}, see \cite[Theorem 3.5.3]{Ratcc}. Let $A,B,C$ be three points in $\MM_k^d$ such that the angle between the geodesics $AB$ and $BC$ is a right angle. If we rescale $\MM_k^d$ by $\sqrt{-k}$, then the three points correspond to $a,b,c\in\HH^d$ and the angle between $ab$ and $bc$ remains a right angle. The first law of cosines is 
$
\cosh \left(\dd_{-1}(a,c) \right)=\cosh\left( \dd_{-1}(a,b)\right) \cosh\left( \dd_{-1} (b,c)\right)
$
{which translates to \ref{lawcos}.}

\begin{remark}\label{rem:zahlecorrection}  
The study by Z{\"a}hle \cite{zahle} makes a valuable contribution to Wicksell’s problem in spherical spaces. However, as already anticipated in the introduction we have identified an oversight in the expression for the volume element under equidistant decomposition: the exponent \(d-1\) on the cosine factor in \eqref{eqdeco} is inadvertently omitted in \cite[Equation (2.1)]{zahle}. While this does not affect the two–dimensional case, it does lead to substantial deviations in dimensions \(d\ge3\). In what follows, we present the necessary adjustments and extend the original results to all \(d\ge2\) as well as to negatively curved spaces.
\end{remark}

\section{Section formulas}\label{sec:form}

Recall the setup and the notation introduced in Section \ref{sec:SetUpNotaiton}. The goal of this section is to derive section formulas which express the parameters $N_{d-1}$ and $\nu_{d-1}$ of the induced process within $L$ in terms of the parameters $N_d$ and $\nu_d$ of the original random ball process. For that purpose, let $x\in\MM_k^d$ be a point of the marked point process $\xi_d$ and $R_x$ be the corresponding radius. We represent $x$ as $(y,h)$ using the equidistant decomposition as introduced in Section \ref{sec:prelim}. If the corresponding radius $R_x$ is larger than $h$, then the ball $B(x,R_x)$ intersects $L$ and induces a ball $B_L(y,R_y)$ within $L$. 

If $k\neq 0$, for any point $z$ on the boundary of $B_L(y,R_y)$ by Equation \eqref{lawcos} and using that
$\dd_{k}(x,z)=R_x
$,
$
\dd_k(x,y)= \vert h \vert   
$
and
$
\dd_k (y,z)=R_y
$
we obtain
$
\cos\left( \sqrt{k} R_x \right) = \cos \left( \sqrt{k}    h     \right) \cos\left( \sqrt{k} R_y \right)
$, where we recall our convention \eqref{eq:CosConvention}.
Thus,
$$
\cos(\sqrt{k}R_y)=\frac{\cos(\sqrt{k}R_x)}{\cos(\sqrt{k}   h     )}.
$$
If $k=0$, then $R_y=\sqrt{R^2_x-h^2}$. 
To summarize, we define on $\lbrace (t,h)\in\mathbb{R}^2: 0\leq \vert h \vert  \leq t \leq l_k \rbrace$,
\[
\alpha_k(t,h) :=
\begin{cases}
\frac{1}{\sqrt{-k}}\mathrm{arcosh}\left(\frac{\cosh(\sqrt{-k} t)}{\cosh(\sqrt{-k}   h    )}\right)&\quad \text{if } k<0\\
\sqrt{t^2-h^2}&\quad \text{if } k=0\\
\frac{1}{\sqrt{ k}}\arccos\left(\frac{\cos(\sqrt{k} t)}{\cos(\sqrt{k}    h     )}\right)&\quad \text{if } k>0.
\end{cases}
\]
Then, whenever $R_x\geq h$, we have $R_y=\alpha_k(R_x,h)$.  For fixed $h$, we may treat $\alpha_k$ as a bijective function of $t$ only, which admits an inverse. So for fixed $h$, we define $\alpha^{-1}_k(t):=\beta_k(t,h)$ by
\[
\beta_k(t,h) :=
\begin{cases}
\frac{1}{\sqrt{-k}}\mathrm{arcosh}\left( \cosh(\sqrt{-k} t) \cosh(\sqrt{-k}    h     ) \right)&\quad \text{if } k<0\\
\sqrt{t^2+h^2}&\quad \text{if } k=0\\
\frac{1}{\sqrt{ k}}\arccos\left( \cos(\sqrt{k} t) \cos(\sqrt{k}    h     ) \right)&\quad \text{if } k>0.
\end{cases}
\]
It follows that $R_x=\beta_k(R_y,h)$. Note that no restriction is needed here, as $R_x$ is always larger than $h$. 
Observe that for any $k$, as $t$ tends to $0$, we have that $\beta_k(t,h)$ tends to $h$. This means that as the radius $R_y$ tends to zero, the distance of $x$ from $y$ tends to the radius $R_x$. 

The next result, a section formula for constant curvature spaces, is our first contribution. A special case is illustrated in Figure \ref{fig:sectional_densities}.

\begin{figure}[t]
\centering
\includegraphics[width=0.7\columnwidth]{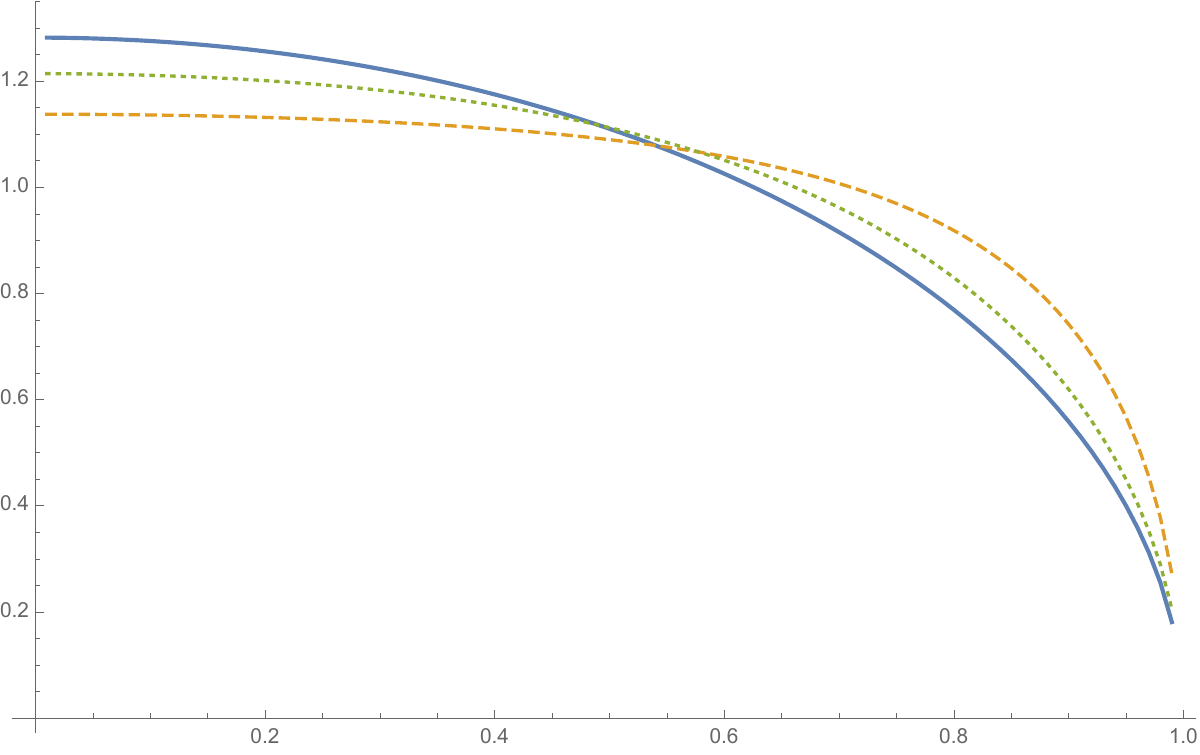}
\caption{Densities on $[0,1]$ of the measures $\nu_{d-1}$ of observed section radii when slicing a random process of balls of fixed radius \(1\) in a $3$-dimensional spaces of constant curvature $k$. The curves correspond to \(k=0\) (solid, blue), \(k=+1\) (dashed, orange), and \(k=-1\) (dotted, green).}
\label{fig:sectional_densities}
\end{figure}

\begin{theorem}[Section formula for constant curvature spaces]\label{th:ratioestimate}
Let $\MM_k^d$ be a space of constant curvature $k\neq 0$, let $L$ be a totally geodesic hypersurface and recall that $l_k=\frac{\pi}{2\sqrt{k}}$ when $k>0$, and $l_k=\infty$ otherwise. Then,
$$
N_{d-1}   \nu_{d-1}((r,l_k]) 
= N_d \int_r^{l_k} \int_{-\alpha_k(R,r)}^{ \alpha_k(R,r)}  \cos^{d-1}(\sqrt{k}h)  \,\dd h\, \nu_{d}(\dd R),\qquad r\in[0,l_k].
$$
Specifically, for the ratio of the intensity parameters, we obtain
$$
\frac{N_{d-1}}{N_d}=\int_0^{l_k} \int_{-R}^R \cos^{d-1}(\sqrt{k} h) \, \dd h \, \nu_d(\dd R).
$$
\end{theorem}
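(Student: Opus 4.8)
The plan is to compute the intensity measure of the induced marked point process $\xi_{d-1}$ on $L$ directly, by pushing forward the intensity measure $\Lambda_d = N_d\,\mu_{k,d}\otimes\nu_d$ of $\xi_d$ under the map that sends a ball $B(x,R_x)$ meeting $L$ to its induced section ball $B_L(y,R_y)$. The key observation is that a point $(x,R_x)$ of $\xi_d$ contributes a section ball of radius exceeding $r$ precisely when the induced radius $R_y=\alpha_k(R_x,h)$ satisfies $R_y>r$, where $h=\mathrm{dist}(x,L)$ is the signed distance of $x$ from $L$ and $y=p_L(x)$.

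First I would invoke the equidistant decomposition of Section~\ref{sec:prelim}, writing $\mu_{k,d}(\dd x)=\cos^{d-1}(\sqrt{k}h)\,\dd h\,\mu_L(\dd y)$ via the diffeomorphism $x\mapsto(y,h)$. The mean number of induced section balls per unit $\mu_L$-volume with radius in $(r,l_k]$ is, by the definition of the intensity measure and the Campbell/Mecke formalism for marked point processes, obtained by integrating the indicator $\mathbbm{1}\{R_y>r\}=\mathbbm{1}\{\alpha_k(R_x,h)>r\}$ against $\Lambda_d$ and factoring out the (infinite but homogeneous) $\mu_L$-mass. Concretely I would write
$$
N_{d-1}\,\nu_{d-1}((r,l_k]) = N_d\int_0^\infty\int_{-l_k}^{l_k}\mathbbm{1}\{\alpha_k(R,h)>r\}\,\cos^{d-1}(\sqrt{k}h)\,\dd h\,\nu_d(\dd R),
$$
where the $h$-integral runs over the range of signed distances and the factor $N_d$ and the $\mu_L$-homogeneity come from stationarity. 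The remaining task is to turn the condition $\alpha_k(R,h)>r$ into explicit limits of integration.

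Next I would translate the constraint. Since for fixed $h$ the map $t\mapsto\alpha_k(t,h)$ is increasing with inverse $\beta_k(\cdot,h)$, and since $\alpha_k(R,h)$ is decreasing in $|h|$, the condition $\alpha_k(R,h)\geq r$ is equivalent to $|h|\leq\alpha_k(R,r)$: indeed, using the Pythagorean relation \eqref{lawcos} one checks that $\alpha_k(R,h)\geq r$ holds iff $\cos(\sqrt{k}R)\leq\cos(\sqrt{k}h)\cos(\sqrt{k}r)$ (with the appropriate hyperbolic/trigonometric reading), iff $|h|\leq\alpha_k(R,r)$, and this forces $R\geq r$ so the outer integral starts at $r$. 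Substituting these limits yields exactly
$$
N_{d-1}\,\nu_{d-1}((r,l_k]) = N_d\int_r^{l_k}\int_{-\alpha_k(R,r)}^{\alpha_k(R,r)}\cos^{d-1}(\sqrt{k}h)\,\dd h\,\nu_d(\dd R).
$$
The intensity ratio then follows immediately by setting $r=0$ and using that $\nu_{d-1}$ is a probability measure, so $\nu_{d-1}((0,l_k])=1$, together with $\alpha_k(R,0)=R$.

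The main obstacle I anticipate is making the first displayed identity fully rigorous, i.e.\ justifying that the induced process on $L$ is itself a stationary marked point process whose intensity measure disintegrates as $N_{d-1}\,\mu_{k,L}\otimes\nu_{d-1}$, and that its radius-distribution-times-intensity is computed by exactly this pushforward integral. This requires a careful Mecke-type argument handling the $\mu_L$-homogeneity (formally dividing out an infinite volume, or equivalently working on a bounded observation window and passing to densities) and the independent marking; the geometry of the distance constraint, by contrast, is a routine monotonicity check once \eqref{lawcos} is in hand. I would therefore devote most of the care to the measure-theoretic bookkeeping of the section operation and treat the conversion of $\{\alpha_k(R,h)>r\}$ into the symmetric limits $\pm\alpha_k(R,r)$ as the short, explicit part.
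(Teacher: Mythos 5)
Your proposal is correct and follows essentially the same route as the paper: the paper also computes the intensity measure of the induced process in two ways on a window $F=C\times(r,l_k]$ with $C\subset L$ compact (your "bounded observation window"), applies the Campbell theorem with the equidistant decomposition, and converts the indicator $\mathbf{1}\{r<\alpha_k(R,h)\}$ into the limits $|h|<\alpha_k(R,r)$, $R>r$ exactly as you describe. The only cosmetic caveat is that the inequality $\cos(\sqrt kR)\leq\cos(\sqrt kh)\cos(\sqrt kr)$ reverses direction under the hyperbolic reading of the convention \eqref{eq:CosConvention}, which your parenthetical already anticipates.
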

\begin{proof}
We fix a set $F=C\times (r,l_k]\subset L\times (0,l_k]$, where $C$ a is compact subset of $L$. We compute the value of $\Lambda_{d-1}(F)$ in two different ways:
\begin{align}\label{eq:Lambda}
\Lambda_{d-1}(F)=N_{d-1} \mu_{k,d-1}(C)\nu_{d-1}((r,l_k]) .
\end{align}
A point $(y,R_y)$ of $\xi_{d-1}$ arises from a point $(x,R_x)$ of $\xi_d$ with $x=(y,h)$ if and only if $h\leq R_x$. Therefore, the expected number of marked points of $\xi_{d-1}$ in $F$ equals the expected number of marked points of $\xi_d$ for which the corresponding radius is larger than their distance from the set $C$:
\begin{align}\label{eq:xtoy}
\mathbb{E}\sum_{(y,R_y)\in\xi_{d-1}} \mathbf{1}_F(y,R_y)
=  
\mathbb{E} \sum_{\substack{((y,h),R_x)\in \xi_d \\ h\leq R_x}} \mathbf{1}_F(y,\alpha_k(R_x,h)).
\end{align} 
Applying the Campbell theorem for marked point processes, see \cite[Section 4.2.2]{Mecke1987}, Fubini's theorem for the representation $\Lambda_d=N_d\mu_{k,d} \otimes \nu_d$ and the equidistant decomposition \eqref{eqdeco} we obtain 
\begin{align}
&\mathbb{E} \sum_{\substack{((y,h),R_x)\in \xi_d \\ h\leq R_x}} \mathbf{1}\{ F \}(y,\alpha_k(R_x,h))\\
&=
\int_{\MM_k^d \times (0,\infty)} \mathbf{1}_F(y, \alpha_k(R,h)) \, \Lambda_d(\dd (x,R))\nonumber \\
&=N_d
\int_{\MM_k^d \times (0,\infty)} \mathbf{1}_F(y, \alpha_k(R,h)) \, \mu_d (\dd x) \, \nu_{d}(\dd R)\nonumber \\
&=N_d\int_0^{l_k} \int_L \int_{-R}^R \mathbf{1}_F(y,\alpha_k(R,h)) \cos^{d-1}(\sqrt{k}h)  \,\dd h\,  \mu_L (\dd y) \, \nu_{d}(\dd R)\nonumber \\
&= N_d\mu_{d-1}(C)\int_0^{l_k}  \int_{-R}^R  \mathbf{1}_{ (r,l_k]}(\alpha_k(R,h)) \cos^{d-1}(\sqrt{k}h)  \,\dd h\, \nu_{d}(\dd R). \nonumber  
\end{align}
Observe that $ \mathbf{1}_{ (r,l_k]}(\alpha_k(R,h))$ is non-zero if and only if $r < \alpha_k(R,h)\leq l_k$, {which provides for $h$ the following inequality: $  \alpha_k(R,r)> \vert h\vert  > 0$ and $R > r$.} Thus, 
 \begin{align}\label{eq:firstpartmean}
 \mathbb{E} \sum_{\substack{((y,h),R_x)\in \xi_d \\ h\leq R_x}} \mathbf{1}_F(y,\alpha_k(R_x,h))
&= N_d\mu_{d-1}(C)\int_r^{l_k}  \int_{-\alpha_k(R,r)}^{ \alpha_k(R,r)}   \cos^{d-1}(\sqrt{k}h)  \,\dd h\, \nu_{d}(\dd R).
 \end{align}
Combining \eqref{eq:firstpartmean}, \eqref{eq:xtoy} and \eqref{eq:Lambda}, we obtain
$$
N_{d-1}   \nu_{d-1}((r,l_k]) 
= N_d \int_r^{l_k} \int_{-\alpha_k(R,r)}^{ \alpha_k(R,r)}  \cos^{d-1}(\sqrt{k}h)  \,\dd h\, \nu_{d}(\dd R).
$$
Taking $r=0$ in the definition of $F$ we finally conclude that
$$
\frac{N_{d-1}}{N_d}=  \int_0^{l_k}  \int_{-R}^R   \cos^{d-1}(\sqrt{k}h)  \,\dd h\, \nu_{d}(\dd R),
$$
which completes the proof.
\end{proof}

\begin{remark}\label{re:2}
\begin{itemize}
\item[(a)] In case that $k>0$, making the substitution $u=\sin^2(\sqrt{k} h)$, we obtain the following expression for the ratio of intensities: 
\begin{align*}
\frac{N_{d-1}}{N_d} &=\int_0^{\frac{\pi}{2 \sqrt{k}}} \frac{1}{\sqrt{k}} \int_0^{\sin^2(\sqrt{k}R)} (1-u)^{\frac{d-3}{2}} u^{-\frac{1}{2}}\, \dd u \, \nu_d(\dd R)\\
&=\int_0^{\frac{\pi}{2 \sqrt{k}}} \frac{1}{\sqrt{k}} B_{\sin^2(\sqrt{k} R)} \left(\frac{1}{2},\frac{d-1}{2}\right)  \, \nu_d(\dd R),
\end{align*}
where $B_y(a,b):=\int_0^y u^{a-1}(1-u)^{b-1}\,\dd u$ is the incomplete beta function, $a,b>0$, $0<y<1$. For $d=2$ this reduces to $\frac{N_{d-1}}{N_d}={2\over\sqrt{k}}\int_0^{\frac{\pi}{2\sqrt{k}}} \sin( \sqrt{k} R ) \,\nu_d(\dd R)$. It had been claimed in \cite[Equation (2.5)]{zahle} that the same reduction is possible for all space dimensions. However, this is not correct, as for $d=4$, for example, one has that $\frac{N_{d-1}}{N_d}={2\over 3\sqrt{k}}\int_0^{\frac{\pi}{2\sqrt{k}}} \sin(\sqrt{k}R)(2+\cos^2(2\sqrt{k}R))\,\nu_d(\dd R)$. 

\item[(b)] In case that $k<0$, by the binomial expansion, we obtain that the ratio equals 
$$
\frac{N_{d-1}}{N_d}=\int_0^{\infty} \frac{1}{2^{d-2}\sqrt{-k}}\sum_{i=0}^{d-1} 
\binom{d-1}{i}\frac{e^{(d-2i-1)\sqrt{-k} R}-1}{d-2i-1}
\, \nu_d(\dd R).
$$
\end{itemize}
\end{remark}

\begin{corollary}
The result of Theorem \ref{th:ratioestimate} can also be expressed by saying that for every non-negative measurable function $f:[0,l_k]\to\RR$,
\begin{align}
	\notag\int_0^{l_k} f(R)  \, \nu_{d-1} (\dd R) = &\left( \int_0^{l_k} \int_{-R}^R \cos^{d-1}(\sqrt{k} h) \, \dd h \, \nu_d(\dd R) \right)^{-1}\\
	& \qquad\times \int_0^{l_k} \int_{-R}^R f(\alpha_k(R,h))  \cos^{d-1}(\sqrt{k}h)  \,\dd h\, \nu_{d}(\dd R) .\label{eq:sectionformula}
\end{align}
\end{corollary}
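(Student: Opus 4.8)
The plan is to derive the corollary directly from \Cref{th:ratioestimate} via the standard measure-theoretic bootstrap from indicator functions to general non-negative measurable functions. The theorem already gives the desired identity in the special case $f = \mathbf{1}_{(r,l_k]}$: indeed, dividing the main formula of the theorem by the intensity ratio formula yields
\begin{align*}
\int_0^{l_k} \mathbf{1}_{(r,l_k]}(R)\,\nu_{d-1}(\dd R)
&= \nu_{d-1}((r,l_k])\\
&= \left(\int_0^{l_k}\!\int_{-R}^{R}\cos^{d-1}(\sqrt{k}h)\,\dd h\,\nu_d(\dd R)\right)^{-1}\\
&\qquad\times \int_r^{l_k}\!\int_{-\alpha_k(R,r)}^{\alpha_k(R,r)}\cos^{d-1}(\sqrt{k}h)\,\dd h\,\nu_d(\dd R).
\end{align*}
So the first step is to check that the right-hand side here coincides with the right-hand side of \eqref{eq:sectionformula} when $f = \mathbf{1}_{(r,l_k]}$. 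This is exactly the computation performed inside the proof of the theorem (the step producing \eqref{eq:firstpartmean}): the inner integrand $\mathbf{1}_{(r,l_k]}(\alpha_k(R,h))$ is non-zero precisely when $R > r$ and $|h| < \alpha_k(R,r)$, which rewrites $\int_0^{l_k}\int_{-R}^{R}\mathbf{1}_{(r,l_k]}(\alpha_k(R,h))\cos^{d-1}(\sqrt{k}h)\,\dd h\,\nu_d(\dd R)$ as $\int_r^{l_k}\int_{-\alpha_k(R,r)}^{\alpha_k(R,r)}\cos^{d-1}(\sqrt{k}h)\,\dd h\,\nu_d(\dd R)$. Hence \eqref{eq:sectionformula} holds for all indicators of intervals $(r,l_k]$.

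Next I would extend this to arbitrary non-negative measurable $f$ by a monotone-class / standard-machine argument. The collection of sets $A\subseteq[0,l_k]$ for which \eqref{eq:sectionformula} holds with $f=\mathbf{1}_A$ contains all intervals $(r,l_k]$, which form a $\pi$-system generating the Borel $\sigma$-algebra on $[0,l_k]$; since both sides of \eqref{eq:sectionformula} are measures in $A$ (the left side is $\nu_{d-1}(A)$ up to the identification $\int f\,\dd\nu_{d-1}=\nu_{d-1}(A)$, and the right side is a constant multiple of the pushforward of the finite measure $\cos^{d-1}(\sqrt{k}h)\,\dd h\,\nu_d(\dd R)$ under $(R,h)\mapsto\alpha_k(R,h)$), the identity extends to all Borel sets by the uniqueness/Dynkin argument. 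By linearity it then holds for non-negative simple functions, and by monotone convergence (approximating $f$ from below by an increasing sequence of simple functions) for every non-negative measurable $f$.

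The cleanest way to phrase this is actually to observe that \eqref{eq:sectionformula} is precisely the statement that $\nu_{d-1}$ equals the normalized pushforward measure $\Phi_*\kappa / \kappa(\text{domain})$, where $\kappa(\dd R\,\dd h)=\cos^{d-1}(\sqrt{k}h)\,\mathbf{1}\{|h|\le R\}\,\dd h\,\nu_d(\dd R)$ is a finite measure on $\{(R,h):0\le|h|\le R\le l_k\}$ and $\Phi(R,h)=\alpha_k(R,h)$; the right-hand side of \eqref{eq:sectionformula} is just $\int f\circ\Phi\,\dd\kappa$ divided by the total mass $\kappa$, which is the change-of-variables formula for a pushforward. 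Two probability measures that assign the same value to every set $(r,l_k]$ must coincide, and this is what \Cref{th:ratioestimate} establishes. I do not expect any genuine obstacle here: the only point requiring a line of care is confirming that the intensity-ratio normalization in the theorem is exactly $\kappa$ of the full domain, so that the normalized pushforward is a probability measure and the identification $\nu_{d-1}((r,l_k]) = \int \mathbf{1}_{(r,l_k]}\,\dd\nu_{d-1}$ is consistent; both are immediate from the two displayed formulas in the theorem.
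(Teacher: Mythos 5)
Your proposal is correct and matches the paper's intent: the corollary is stated there without an explicit proof, precisely because it follows from \Cref{th:ratioestimate} by the standard indicator-to-measurable-function machine you describe (indicators of the generating $\pi$-system $\{(r,l_k]\}$ via the computation already done for \eqref{eq:firstpartmean}, then Dynkin, simple functions, and monotone convergence). Your reformulation of \eqref{eq:sectionformula} as the statement that $\nu_{d-1}$ is a normalized pushforward is a clean way to package the same argument and introduces nothing beyond what the theorem provides.
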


\section{Inversion formulas}\label{sec:inver}

In this section we provide the inversion formula and thus the solution of  Wicksell's problem for all $d$-dimensional spaces of constant curvature. The idea is to choose a suitable function $f$ in \eqref{eq:sectionformula}, which essentially corresponds to an Abel-type inversion formula, similar as in the classical Euclidean case treated in \cite{Mecke1987}. We consider the cases $k<0$ and $k>0$ separately. In fact, the limits of the integrals defined by the hyperbolic functions differ in nature from the bounded trigonometric functions used in spherical spaces and we were not able to handle both case in our arguments in a unified way. 

\subsection{The case of negative curvature}

We start with the case of negative curvatures.

\begin{theorem}[Inversion formula for spaces of negative curvature]\label{thm:Inversionk<0}
Let $\MM_k^d$ be a space of constant curvature $k<0$. Then for any $a>0$,
\begin{align*}
\nu_d((a,\infty))&=\frac{N_{d-1} \sqrt{-k}  }{N_d \pi}\left( \frac{1}{\cosh^{d-2}(\sqrt{-k}a)  } \int_{a}^\infty \frac{\cosh^{d-1} (\sqrt{-k}y)}{ \sqrt{\cosh^2  (\sqrt{-k}y)-\cosh^2(\sqrt{-k}a)}} \,  \nu_{d-1}(\dd y)\right. \nonumber \\
&\qquad +
\left.
(d-1)\int_a^{\infty}\int_{\arcsin \left(\frac{\cosh(\sqrt{-k}a)}{\cosh(\sqrt{-k}y)} \right)}^{\frac{\pi}{2} }
    ( \sin\theta)^{-(d-1)}  \, \dd \theta \, \nu_{d-1}(\dd y) \right).
\end{align*}
\end{theorem}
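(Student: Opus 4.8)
The plan is to solve Wicksell's problem in the negatively curved case by the classical Abel-inversion strategy, adapted to our kernel: instead of inverting the relation between $\nu_{d-1}$ and $\nu_d$ abstractly, I would insert a cleverly chosen, $a$-dependent test function into the section formula \eqref{eq:sectionformula}. Recall from \cref{th:ratioestimate} and its corollary that, since $l_k=\infty$ for $k<0$ and $\cos^{d-1}(\sqrt{k}h)=\cosh^{d-1}(\sqrt{-k}h)$ under convention \eqref{eq:CosConvention}, the section formula can be rearranged into
\begin{align*}
N_{d-1}\int_0^\infty f(\rho)\,\nu_{d-1}(\dd\rho)
= N_d\int_0^\infty\Big(\underbrace{\int_{-R}^R f(\alpha_k(R,h))\cosh^{d-1}(\sqrt{-k}h)\,\dd h}_{=:\,G_f(R)}\Big)\,\nu_d(\dd R).
\end{align*}
The aim is to find, for each fixed $a>0$, a function $f=f_a$ for which $G_{f_a}(R)=\mathbf 1_{(a,\infty)}(R)$. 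Once this is achieved, the right-hand side collapses to $N_d\,\nu_d((a,\infty))$, and the theorem is obtained by reading off $f_a$ in the left-hand integral.

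First I would reduce the equation $G_{f_a}=\mathbf 1_{(a,\infty)}$ to an Abel integral equation. Since the integrand is even in $h$, I substitute $\rho=\alpha_k(R,h)$, i.e.\ $\cosh(\sqrt{-k}h)=\cosh(\sqrt{-k}R)/\cosh(\sqrt{-k}\rho)$, which rewrites $G_f(R)$ as an integral over $\rho\in(0,R)$ whose kernel carries the factor $\big(\cosh^2(\sqrt{-k}R)-\cosh^2(\sqrt{-k}\rho)\big)^{-1/2}$. The further substitution $s=\cosh^2(\sqrt{-k}\rho)$, $w=\cosh^2(\sqrt{-k}R)$ then produces the exact Abel form
\begin{align*}
\frac{\sqrt{-k}\,G_f(R)}{w^{d/2}}=\int_1^w\frac{\psi(s)}{\sqrt{w-s}}\,\dd s,\qquad \psi(s):=\frac{f(\rho(s))}{s^{(d+1)/2}},
\end{align*}
so that the requirement $G_{f_a}=\mathbf 1_{(a,\infty)}$ corresponds to the data $g(w)=\sqrt{-k}\,w^{-d/2}\mathbf 1_{\{w>w_a\}}$ with $w_a:=\cosh^2(\sqrt{-k}a)$.

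Next I would apply the standard inversion $\psi(s)=\tfrac1\pi\frac{\dd}{\dd s}\int_1^s g(w)(s-w)^{-1/2}\,\dd w$ and evaluate. After the substitution $w=s\sin^2\theta$, the inner integral becomes an elementary multiple of $s^{-(d-1)/2}\int_{\arcsin\sqrt{w_a/s}}^{\pi/2}(\sin\theta)^{-(d-1)}\,\dd\theta$; differentiating in $s$ yields two contributions, a boundary term from the moving lower limit $\arcsin\sqrt{w_a/s}$ and a term from differentiating the power $s^{-(d-1)/2}$. Undoing the substitutions via $s=\cosh^2(\sqrt{-k}y)$ and recalling $f_a(y)=s^{(d+1)/2}\psi(s)$, the boundary term produces precisely the factor $\cosh^{d-1}(\sqrt{-k}y)\big/\big(\cosh^{d-2}(\sqrt{-k}a)\sqrt{\cosh^2(\sqrt{-k}y)-\cosh^2(\sqrt{-k}a)}\big)$, while the power-derivative term produces the $(d-1)\int_{\arcsin(\cosh(\sqrt{-k}a)/\cosh(\sqrt{-k}y))}^{\pi/2}(\sin\theta)^{-(d-1)}\,\dd\theta$ contribution; these assemble into the two summands displayed in the statement. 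Substituting $f_a$ into the left-hand side of the section formula and using $G_{f_a}=\mathbf 1_{(a,\infty)}$ on the right then gives the asserted expression for $\nu_d((a,\infty))$.

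The main obstacle is rigor rather than computation. The function $f_a$ has an integrable singularity at $\rho=a$, does not decay at infinity, and is not sign-definite, while the domain of integration is unbounded because $l_k=\infty$. The corollary \eqref{eq:sectionformula} is stated only for non-negative $f$, so I must first extend it to $f_a$ by writing $f_a=f_a^+-f_a^-$, applying the formula to each part through monotone approximation by bounded non-negative functions, and checking that $\int f_a^{\pm}\,\dd\nu_{d-1}$ are both finite. The convergence of these integrals at infinity is governed by the dominant $(\sin\theta)^{-(d-1)}$ term and needs to be controlled carefully. In parallel, the Abel inversion itself must be justified in the presence of the jump of $g$ at $w_a$ and the endpoint singularity of the kernel, which is where the differentiation-under-the-integral and Fubini interchanges require the most attention.
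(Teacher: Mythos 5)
Your overall strategy is sound and rests on the same Abel-transform structure as the paper's proof: after passing to the variables $s=\cosh^2(\sqrt{-k}\rho)$, $w=\cosh^2(\sqrt{-k}R)$ the section formula \eqref{eq:sectionformula} becomes an Abel equation, which is resolved with a kernel of the type $x^{d-1}(x^2-a^2)^{-1/2}$. The organisational difference is that the paper inserts the \emph{fixed, nonnegative} test function $\tilde f(x)=\mathbf{1}_{(a,\infty)}(x)x^{d-1}(x^2-a^2)^{-1/2}$, obtains the weighted-moment identity \eqref{eq:ndtond-1} by a forward computation, and only afterwards converts $\int_a^\infty z^{d-1}\,\tilde\nu_d(\dd z)$ into the tail via $1=z^{d-1}\int_z^\infty (d-1)u^{-d}\,\dd u$ and Fubini; you instead solve directly for the test function $f_a$ with $G_{f_a}=\mathbf{1}_{(a,\infty)}$. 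Your route is more direct but shifts real work onto rigour: $f_a$ is signed and singular, so \eqref{eq:sectionformula} (stated for nonnegative $f$) must be extended, and you must actually verify that the Abel inversion formula solves the equation for the discontinuous datum $g$. The paper's all-nonnegative, forward-only route avoids both obligations; you flag them but do not discharge them.

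The concrete problem is the sign of the second term. In your own computation, differentiating $s^{-(d-1)/2}H(s)$ with $H(s)=\int_{\arcsin\sqrt{w_a/s}}^{\pi/2}(\sin\theta)^{-(d-1)}\,\dd\theta$ gives the boundary term with a $+$ and the power-derivative term with a $-$, since $\tfrac{\dd}{\dd s}s^{-(d-1)/2}=-\tfrac{d-1}{2}s^{-(d+1)/2}<0$; after multiplying by $s^{(d+1)/2}$ your $f_a$ equals the first displayed summand \emph{minus} $(d-1)H$. So, carried out correctly, your argument yields the identity with a minus sign between the two terms, and the assertion that the contributions ``assemble into the two summands displayed in the statement'' conceals a sign flip. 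The minus sign is in fact the correct one: taking $\nu_d=\delta_{R_0}$ with $R_0>a$, a direct computation using $N_{d-1}\nu_{d-1}(\dd y)=2N_d\sinh(\sqrt{-k}y)\cosh^{-d}(\sqrt{-k}y)\cosh^{d}(\sqrt{-k}R_0)\big(\cosh^2(\sqrt{-k}R_0)-\cosh^2(\sqrt{-k}y)\big)^{-1/2}\mathbf{1}_{(0,R_0)}(y)\,\dd y$ and the elementary integral $\int_{A}^{B}v^{-1}\big((v-A)(B-v)\big)^{-1/2}\dd v=\pi(AB)^{-1/2}$ shows that the first summand, prefactor included, equals $\big(\cosh(\sqrt{-k}R_0)/\cosh(\sqrt{-k}a)\big)^{d-1}>1=\nu_d((a,\infty))$, while the second summand is strictly positive; the two therefore cannot add up to the left-hand side. (The same slip occurs in the paper's own passage from \eqref{eq:last1}, where \eqref{eq:last2} and \eqref{eq:last3} appear as a difference, to \eqref{eq:lastform2}, where they appear as a sum.) You should correct the sign in your final assembly rather than force agreement with the displayed formula.
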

\begin{proof}
Let $\tilde{\nu}_{d-1}$ denote the pushforward on $[1,\infty)$ of the measure $\nu_{d-1}$ on $[0,\infty)$ through the function $\cosh(\sqrt{-k}x)$.
We consider the function {$f(R):= \tilde{f}(\cosh (\sqrt{-k} R) )$,} where the precise definition of $\tilde{f}$ will be given later.
According to Equation \eqref{eq:sectionformula} {and the definition of $\alpha_k$,}
\begin{align}
I:=\int_1^{\infty}\tilde{f}(y)\,\tilde{\nu}_{d-1}( \dd y)  &=\int_0^{\infty} \tilde{f} (\cosh (\sqrt{-k} R) )\, \nu_{d-1}(\dd R)\nonumber \\
&=
\frac{N_d}{N_{d-1}} \int_0^{\infty} \int_{-R}^{R} \tilde{f}(\cosh (\sqrt{-k} \alpha_k(R,h)) \cosh^{d-1}(\sqrt{-k} h) \, \dd h \, \nu_d(\dd R)\nonumber \\
&=
\frac{N_d}{N_{d-1}} \int_0^{\infty} \int_{-R}^{R} \tilde{f}\left(\frac{\cosh (\sqrt{-k} R)}{\cosh (\sqrt{-k} h)}\right) \cosh^{d-1}(\sqrt{-k} h) \, \dd h \, \nu_d(\dd R)\nonumber \\
&=
\frac{N_d}{N_{d-1}} \int_0^{\infty} 2 \int_{0}^{R} \tilde{f}\left(\frac{\cosh (\sqrt{-k} R)}{\cosh (\sqrt{-k} h)}\right) \cosh^{d-1}(\sqrt{-k} h) \, \dd h \, \nu_d(\dd R).\label{eq:sectionform}
\end{align}
Here we {also} used Equation \eqref{lawcos} and the symmetry of the hyperbolic cosine function.  

We now set $u:=\cosh (\sqrt{-k}h)$. Then, $\dd u=\sqrt{-k} \sinh (\sqrt{-k}h) \,\dd h$, and 
$\sinh (\sqrt{-k}h) = ( \cosh^2 (\sqrt{-k}h )-1 )^{\frac{1}{2}}=(u^2-1)^{\frac{1}{2}} $. Therefore
$$
\frac{1}{\sqrt{-k}}( u^2-1)^{-\frac{1}{2}}\,\dd u =\dd h
$$
and so
$$
I=\frac{N_d}{N_{d-1}} \int_0^{\infty} \frac{2}{\sqrt{-k}} \int_{1}^{\cosh (\sqrt{-k} R)} \tilde{f}\left(\frac{\cosh (\sqrt{-k} R)}{u}\right)u^{d-1}(u^2-1)^{-\frac{1}{2}}  \, \dd u \, \nu_d(\dd R).
$$
Setting $z:=\cosh (\sqrt{-k}R)$, and noting that $\tilde{\nu_d}(\dd z) = \nu_d (\dd R)$, we arrive at 
\begin{align}\label{eq:zu}
I=\frac{N_d}{N_{d-1}} \frac{2}{\sqrt{-k}} \int_1^{\infty}  \int_{1}^{z} \tilde{f}\left(\frac{z}{u}\right)u^{d-1}(u^2-1)^{-\frac{1}{2}}  \, \dd u \, \tilde{\nu}_d(\dd z).
\end{align}
Now, choose $\tilde{f}(x):=\mathbf{1}_{(a,\infty)}(x) x^{d-1}(x^2-a^2)^{-\frac{1}{2}}$. This choice yields an Abel-type inversion formula, as in the classical Wicksell's corpuscle problem in Euclidean space \cite{Mecke1987}. 
More precisely, with this choice of $\tilde{f}$, \eqref{eq:zu} can be written as
\begin{align*}
I=\frac{N_d}{N_{d-1}} \frac{2}{\sqrt{-k}} \int_1^{\infty}  \int_{1}^{z} 
 \mathbf{1}_{(a,\infty)}\left(\frac{z}{u}\right) 
\frac{z^{d-1}}{u^{d-1}}(z^2u^{-2} - a^2)^{-\frac{1}{2}}
u^{d-1}(u^2-1)^{-\frac{1}{2}}
\, \dd u \, \tilde{\nu}_d(\dd z) .
\end{align*}
We incorporate the indicator function into the integration limits, noting that $\frac{z}{u}\geq a$ and $u\geq 1$, which is equivalent to $\frac{z}{a}\geq u \geq 1$. This yields
\begin{align*}
I&= \frac{N_d}{N_{d-1}} \frac{2}{\sqrt{-k}} \int_a^{\infty} z^{d-1} \int_1^{\frac{z}{a}}
(z^2u^{-2} - a^2)^{-\frac{1}{2}} (u^2-1)^{-\frac{1}{2}}
\, \dd u \, \tilde{\nu}_d(\dd z)\\
&=  \frac{N_d}{N_{d-1}} \frac{2}{\sqrt{-k}} \int_a^{\infty} z^{d-1} \int_1^{\frac{z}{a}}
ua^{-1}(z^2a^{-2} - u^2)^{-\frac{1}{2}} (u^2-1)^{-\frac{1}{2}}
\, \dd u \, \tilde{\nu}_d(\dd z)\\
&=  \frac{N_d}{N_{d-1}} \frac{\pi}{\sqrt{-k}} \int_a^{\infty} z^{d-1}a^{-1} \, \tilde{\nu}_d(\dd z).
\end{align*}
Combining the facts that $I=\int_0^{\infty}\tilde{f}(y)\,\tilde{\nu_{d-1}}( \dd y) $ and that $\tilde{f}(x)=\mathbf{1}_{(a,\infty)}(x) x^{d-1}(x^2-a^2)^{-\frac{1}{2}}$ together with the above calculation, we obtain
\begin{align}\label{eq:ndtond-1}
\int_a^\infty   x^{d-1} (x^2-a^2)^{-\frac{1}{2}} \, \tilde{\nu}_{d-1}(\dd x) = \frac{N_d}{N_{d-1}} \frac{\pi}{\sqrt{-k}} \int_a^{\infty} z^{d-1}a^{-1}  \, \tilde{\nu}_d(\dd z).
\end{align}

We proceed by considering  the tail $\tilde{\nu}_{d}((a,\infty))$ of the measure $\tilde{\nu}_{d}$: 
\begin{align*}
\tilde{\nu}_{d}((a,\infty))&= \int_a^{\infty} \, \tilde{\nu}_{d}(\dd z)\\
&=  \int_a^{\infty} \left( -
\int_z^\infty {\frac{\dd }{\dd u} \left( \frac{1}{u^{d-1}} \right) } \, \dd u\right)
z^{d-1} 
\, \tilde{\nu}_{d}(\dd z) \\
&= \int_a^{\infty} \int_a^{  u} \frac{d-1}{u^d}  z^{d-1} \, \tilde{\nu}_d(\dd z) \, \dd u\\
&=\int_a^{\infty} \left( \int_a^{  \infty}   z^{d-1} \, \tilde{\nu}_d(\dd z) -\int_u^{  \infty}  z^{d-1} \, \tilde{\nu}_d(\dd z) \right) \frac{d-1}{u^d} \, \dd u.
\end{align*}
We can now use \eqref{eq:ndtond-1} to see that
\begin{align}\label{eq:last1}
\tilde{\nu}_{d}((a,\infty) ) = \frac{N_{d-1} \sqrt{-k} }{N_d \pi} \int_a^\infty  \left(   \int_a^\infty \right.  &a x^{d-1} (x^2-a^2)^{-\frac{1}{2}} \, \tilde{\nu}_{d-1}(\dd x) \nonumber \\
&    \left.  -\int_u^\infty  u  x^{d-1} (x^2-u^2)^{-\frac{1}{2}} \, \tilde{\nu}_{d-1}(\dd x) \right)
\, \frac{d-1}{u^d} \, \dd u.
\end{align}
Evaluating the two integrals yields
\begin{align}\label{eq:last2}
\int_a^\infty   \int_a^\infty  \frac{d-1}{u^d} \, \dd u    \, \frac{a x^{d-1}}{ \sqrt{x^2-a^2}} \, \tilde{\nu}_{d-1}(\dd x) = \int_a^\infty a^{-(d-1)} \frac{a x^{d-1}}{ \sqrt{x^2-a^2}} \, \tilde{\nu}_{d-1}(\dd x)
\end{align}
and 
\begin{align}\label{eq:last3}
\int_a^\infty   \int_u^\infty   u x^{d-1} (x^2-u^2)^{-\frac{1}{2}} \, \tilde{\nu}_{d-1}(\dd x) \frac{d-1}{u^d} \, \dd u  &=(d-1) \int_a^\infty  x^{d-1}  \int_a^x  \frac{u^{-(d-1)}}{\sqrt{x^2-u^2}} \, \dd u\,   \tilde{\nu}_{d-1}(\dd x) .
\end{align}
Noting that $\nu_d((a,\infty))=\tilde{\nu}_d((\cosh(\sqrt{-k}a),\infty))$, from Equations \eqref{eq:last1}, \eqref{eq:last2} and \eqref{eq:last3} we obtain
\begin{align}\label{eq:lastform2}
\nu_d((a,\infty))=& \frac{N_{d-1} \sqrt{-k}  }{N_d \pi} \Bigg(  \frac{1}{\cosh^{ d-2 }(\sqrt{-k}a)}  \int_{\cosh(\sqrt{-k}a)}^\infty \frac{x^{d-1}}{\sqrt{x^2-\cosh^2(\sqrt{-k}a)}} \, \tilde{\nu}_{d-1}(\dd x)  \nonumber\\
 &\quad  +(d-1)\int_{\cosh(\sqrt{-k}a)}^\infty   \int_{\cosh(\sqrt{-k}a)}^x  \frac{u^{-(d-1)}}{\sqrt{x^2-u^2}} \, \dd u\, x^{d-1} \, \tilde{\nu}_{d-1}(\dd x)\Bigg)\nonumber\\
 =&  \frac{N_{d-1} \sqrt{-k}  }{N_d \pi}\Bigg( \frac{1}{\cosh^{d-2}(\sqrt{-k}a)  } \int_{a}^\infty \frac{\cosh^{d-1} (\sqrt{-k}y)}{ \sqrt{\cosh^2  (\sqrt{-k}y)-\cosh^2(\sqrt{-k}a)}} \,  \nu_{d-1}(\dd y) \nonumber \\
 &\quad  + (d-1)\int_{a}^\infty   \int_{\cosh(\sqrt{-k}a)}^{\cosh  (\sqrt{-k}y) } \frac{u^{-(d-1)}}{\sqrt{\cosh^2  (\sqrt{-k}y)-u^2} } \, \dd u 
 \, \cosh^{d-1}  (\sqrt{-k}y) \,  \nu_{d-1}(\dd y)\Bigg).
\end{align}
Now set $u:=\cosh (\sqrt{k} y) \sin \theta$, {for $\theta \in (\arcsin \left(\frac{\cosh(\sqrt{-k}a)}{\cosh(\sqrt{-k}y)} \right) , \frac{\pi}{2})$ }  Then $\dd u = \cosh (\sqrt{k} y) \cos \theta \, \dd \theta $ and 
$\cosh^2 (\sqrt{k} y) - u^2= \cosh^2 (\sqrt{k} y)\cos^2\theta$, and it follows that
\begin{align}\label{eq:sec3}
 \notag &\int_{\cosh(\sqrt{-k}a)}^{\cosh  (\sqrt{-k}y) } \frac{u^{-(d-1)}}{\sqrt{\cosh^2  (\sqrt{-k}y)-u^2} } \, \dd u \\
 &\qquad=
 \int_{\arcsin \left(\frac{\cosh(\sqrt{-k}a)}{\cosh(\sqrt{-k}y)} \right)}^{\frac{\pi}{2} }
  \frac{(\cosh (\sqrt{ k}y)\sin\theta)^{-(d-1)}}{\cosh (\sqrt{ k}y)\cos\theta} \cosh (\sqrt{ k}y)\cos\theta \, \dd \theta.
\end{align}
Substituting \eqref{eq:sec3} into \eqref{eq:lastform2}, we finally obtain
\begin{align*}
\nu_d((a,\infty))&=\frac{N_{d-1} \sqrt{-k}  }{N_d \pi}\Bigg( \frac{1}{\cosh^{d-2}(\sqrt{-k}a)  } \int_{a}^\infty \frac{\cosh^{d-1} (\sqrt{-k}y)}{ \sqrt{\cosh^2  (\sqrt{-k}y)-\cosh^2(\sqrt{-k}a)}} \,  \nu_{d-1}(\dd y) \nonumber \\
&\qquad +
(d-1)\int_a^{\infty}\int_{\arcsin \left(\frac{\cosh(\sqrt{-k}a)}{\cosh(\sqrt{-k}y)} \right)}^{\frac{\pi}{2} }
    ( \sin\theta)^{-(d-1)}  \, \dd \theta \, \nu_{d-1}(\dd y) \Bigg).
\end{align*}
This is the desired inversion formula. 
\end{proof}

\begin{remark}
In Theorem \ref{thm:Inversionk<0} we implicitly assumed that the probability measure $\nu_{d-1}$ arises as the radius distribution of the induced process of some random ball process in $\MM_k^d$. Not every probability measure on $(0,\infty)$ is necessarily of that form. In particular, if the integrals on the right-hand side are not well defined, $\nu_{d-1}$ is not of this form. For example, finiteness of the first integral assumes that $e^{(d-2)y}$ is integrable with respect to $\nu_{d-1}$. Such an effect does also appear in the flat case, but not for positive curvatures $k>0$, because all integrals are finitely supported.
\end{remark}

\subsection{The case of positive curvature}

Next, we turn to the case of positive curvatures. As discussed in Remark \ref{re:2} (a), this has incorrectly been treated in \cite{zahle}. For completeness, we present the correct inversion formula whose proof parallels that of Theorem \ref{thm:Inversionk<0}.

\begin{theorem}[Inversion formula for spaces of positive curvature]\label{th:inveposi}
Let $\MM_k^d$ be a space of constant curvature $k>0$. Then, for any $a\in(0,l_k)$,
\begin{align*}
\nu_d((a,l_k]) 
=&\frac{N_{d-1}\sqrt{k} }{N_d\pi}  
\left(
\frac{ 1}{\cos^{d-2} (\sqrt{ k}a)}   \int_a^{l_k}   \frac{\cos^{d-1} (\sqrt{ k}y)}{\sqrt{\cos^{ 2} (\sqrt{ k}a)-\cos^{ 2} (\sqrt{ k}y)}} \,  \nu_{d-1}(\dd y) 
\right.
 \\
&\quad +
\left.
(d-1)\int_a^{l_k} \int_0^{\arccos \frac{\cos (\sqrt{ k}y)}{\cos (\sqrt{ k}a)}}
\cos^{d-2}\theta\, \dd \theta \, \nu_{d-1}(\dd y) \right).
\end{align*}
\end{theorem}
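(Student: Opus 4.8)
The plan is to mirror the proof of Theorem \ref{thm:Inversionk<0}, substituting the hyperbolic functions by their trigonometric analogues, while keeping careful track of the finite integration limit $l_k=\frac{\pi}{2\sqrt k}$. First I would define $\tilde\nu_{d-1}$ as the pushforward of $\nu_{d-1}$ under the map $x\mapsto\cos(\sqrt k\,x)$; note that since $x$ ranges over $(0,l_k]$ this pushes the mass onto the interval $[0,1)$, with the orientation \emph{reversed} (larger radii correspond to smaller values of $\cos$). This sign reversal, together with the fact that $\alpha_k$ now involves $\cos(\sqrt k R)/\cos(\sqrt k h)$, is the structural difference from the negative-curvature case and is where I expect the bookkeeping to be most error-prone.

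Next I would apply the section formula \eqref{eq:sectionformula} to a test function of the form $f(R):=\tilde f(\cos(\sqrt k R))$, use $\cos(\sqrt k\alpha_k(R,h))=\cos(\sqrt k R)/\cos(\sqrt k h)$ from \eqref{lawcos}, and exploit the evenness of $\cos$ to fold the $h$-integral onto $[0,R]$. I would then substitute $u:=\cos(\sqrt k h)$ (so $\dd u=-\sqrt k\sin(\sqrt k h)\,\dd h$ and $\sin(\sqrt k h)=(1-u^2)^{1/2}$), which converts the inner integral into one of the form $\int \tilde f(z/u)\,u^{d-1}(1-u^2)^{-1/2}\,\dd u$, and set $z:=\cos(\sqrt k R)$. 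The key computational step is to choose $\tilde f(x):=\mathbf 1_{(0,a')}(x)\,x^{d-1}(a'^2-x^2)^{-1/2}$ with $a':=\cos(\sqrt k a)$; the indicator selects $z<a'u$, and the resulting inner integral $\int_{z/a'}^{1}u\,a'^{-1}(u^2-z^2/a'^2)^{-1/2}(1-u^2)^{-1/2}\,\dd u$ should again evaluate to a constant multiple of $\pi$ by the same Abel-type computation, yielding the analogue of \eqref{eq:ndtond-1}.

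From there I would reproduce the tail-recovery argument verbatim: write $\tilde\nu_d((0,a'))=\int_0^{a'}\!\int_z^{a'}(d-1)u^{-d}\,z^{d-1}\,\dd u\,\tilde\nu_d(\dd z)$ via the identity $z^{d-1}=-\int_z^{a'}\frac{\dd}{\dd u}(u^{-(d-1)})\,\dd u\cdot(\text{const})$ (adapted so the integration runs \emph{up to} $a'$ rather than to $\infty$, reflecting the reversed orientation), split into two pieces, insert the relation derived above, and translate back through $\tilde\nu_d((0,\cos(\sqrt k a)))=\nu_d((a,l_k])$. The final substitution $u:=\cos(\sqrt k y)\,(\cos\theta)^{-1}$ or, more symmetrically, $u:=\cos(\sqrt k a)\cos\theta$-type change of variable should convert the remaining inner integral into $\int_0^{\arccos(\cos(\sqrt k y)/\cos(\sqrt k a))}\cos^{d-2}\theta\,\dd\theta$, matching the stated formula.

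The main obstacle, as flagged in the paper's own remark preceding the two subsections, is precisely that the finite range $[0,l_k]$ and the orientation-reversing pushforward make the upper and lower limits behave oppositely to the hyperbolic case; every $\int_a^\infty$ becomes an $\int_0^{a'}$-type object after the substitution, and the singularities of $(a'^2-x^2)^{-1/2}$ now sit at the \emph{upper} endpoint. I would therefore devote the most care to verifying that the indicator-to-limits conversion ($z/u<a'$ and $u<1$ jointly giving $z/a'<u<1$) and the direction of each monotone change of variables are consistent, since the compactly supported integrals guarantee convergence but an inverted limit would silently produce a sign error. The trigonometric Abel integral $\int u(1-u^2)^{-1/2}(c-u^2)^{-1/2}\,\dd u$ evaluating to a constant independent of the parameters is the one genuine calculation I would check explicitly.
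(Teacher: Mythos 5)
Your overall strategy is the paper's own: push $\nu_{d-1}$ forward under $x\mapsto\cos(\sqrt k\,x)$, feed the Abel-type test function $\tilde f(x)=\mathbf 1_{(0,a')}(x)\,x^{d-1}(a'^2-x^2)^{-1/2}$ into the section formula, evaluate the resulting inner integral to a constant multiple of $\pi$, recover the tail of $\tilde\nu_d$, and finish with the substitution $u=\cos(\sqrt k y)/\cos\theta$. Your intermediate substitution $u=\cos(\sqrt k h)$ differs from the paper's $u=\sin(\sqrt k h)$, but this is immaterial: your indicator-to-limits conversion ($z/a'<u\le 1$) is correct, and the integral $\int_{z/a'}^1 u\,a'^{-1}\bigl[(u^2-z^2/a'^2)(1-u^2)\bigr]^{-1/2}\dd u=\tfrac{\pi}{2a'}$ does evaluate as you expect (substitute $v=u^2$), giving the analogue of \eqref{eq:nd-12}. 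Your test function also differs from the paper's by the harmless factor $a'$.

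There is, however, one concrete error, and it sits exactly where you predicted the bookkeeping would be error-prone: the tail-recovery identity. You propose
\begin{equation*}
\tilde\nu_d((0,a'))=\int_0^{a'}\!\int_z^{a'}(d-1)u^{-d}\,z^{d-1}\,\dd u\,\tilde\nu_d(\dd z),
\end{equation*}
justified by truncating the $u$-integration at $a'$ ``reflecting the reversed orientation''. This is false: $-z^{d-1}\int_z^{a'}\frac{\dd}{\dd u}\bigl(u^{-(d-1)}\bigr)\dd u=1-(z/a')^{d-1}$, not $1$, so the right-hand side equals $\tilde\nu_d((0,a'))-a'^{-(d-1)}\int_0^{a'}z^{d-1}\,\tilde\nu_d(\dd z)$. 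The dropped boundary term is not negligible; via your Abel relation it is exactly the first (single-integral) term of the theorem, $\frac{1}{a'^{d-2}}\int_0^{a'}\frac{x^{d-1}}{\sqrt{a'^2-x^2}}\,\tilde\nu_{d-1}(\dd x)$, so following your plan literally yields only the second term of the stated formula. The orientation reversal does not change where the $u$-integral must stop: as in the negative-curvature case one writes $1=-z^{d-1}\int_z^{\infty}\frac{\dd}{\dd u}(u^{-(d-1)})\,\dd u$, applies Fubini to get $\int_0^{\infty}\frac{d-1}{u^d}\int_0^{u\wedge a'}z^{d-1}\,\tilde\nu_d(\dd z)\,\dd u$, and only then splits at $u=a'$; the piece $u\in(a',\infty)$ produces the first term and the piece $u\in(0,a')$ the second. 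Equivalently, you may keep your truncated identity provided you add back the term $a'^{-(d-1)}\int_0^{a'}z^{d-1}\,\tilde\nu_d(\dd z)$ and convert it with \eqref{eq:nd-12}. With that single repair the proposal goes through and coincides with the paper's proof.
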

\begin{proof}
We consider the pushforward measure $\tilde{\nu}$ of $\nu_{d-1}$ through the function $\cos (\sqrt{k}x)$ and define the function {$f(R):= \tilde{f}(\cos (\sqrt{k} R) )$,} which will be made precise later.
According to \eqref{eq:sectionformula}, {combined with the definition of $\alpha_k$ and \eqref{lawcos} we have}
\begin{align}
I=\int_1^{l_k}\tilde{f}(y)\,\tilde{\nu}_{d-1}( \dd y)  =
\frac{N_d}{N_{d-1}} \int_0^{l_k} 2 \int_{0}^{R} \tilde{f}\left(\frac{\cos (\sqrt{k} R)}{\cos (\sqrt{k} h)}\right) \cos^{d-1}(\sqrt{k} h) \, \dd h \, \nu_d(\dd R).\label{eq:sectionform2}
\end{align}
We set $u:=\sin (\sqrt{k}h)$ and $z=\cos (\sqrt{k} R)$, so that $\cos(\sqrt{k}h)=\sqrt{1-\sin^2(\sqrt{k}h)}=\sqrt{1-u^2}$ and $\dd u = \sqrt{k}\cos (\sqrt{k}h)$. This yields
\begin{align*}
I=
\frac{N_d}{N_{d-1}} \frac{2}{\sqrt{k}}\int_0^{1} \int_{0}^{\sqrt{1-z^2}} \tilde{f}\left(\frac{z}{\sqrt{1-u^2}}\right) (1-u^2)^\frac{d-2}{2} \, \dd u \, \tilde{\nu}_d(\dd z).
\end{align*}
We choose for the Abel-type integral inversion the function $\tilde{f}(x):=\mathbf{1}_{(0,a)}(x) (x^{-2}-a^{-2})^{-\frac{1}{2}}x^{d-2}$. Replacing $\tilde{f}$ gives
\begin{align*}
I=
\frac{N_d}{N_{d-1}} \frac{2}{\sqrt{k}}\int_0^{1} \int_{0}^{\sqrt{1-z^2}} \mathbf{1}_{(0,a)}\left(\frac{z}{\sqrt{1-u^2}}\right)
 ((1-u^2)z^{-2}-a^{-2})^{-\frac{1}{2}} 
 \frac{z^{d-2}}{(1-u^2)^\frac{d-2}{2}}  (1-u^2)^\frac{d-2}{2} \, \dd u \, \tilde{\nu}_d(\dd z).
\end{align*}
We incorporate the indicator function into integration bounds as follows:
\begin{align*}
I&=
\frac{N_d}{N_{d-1}} \frac{2}{\sqrt{k}}\int_0^{a} \int_{0}^{\sqrt{1-a^{-2}z^2}} 
 ((1-u^2)z^{-2}-a^{-2})^{-\frac{1}{2}} 
 z^{d-2}    \, \dd u \, \tilde{\nu}_d(\dd z).
\end{align*}
Observe that $((1-u^2)z^{-2}-a^{-2})^{-\frac{1}{2}} = z(1-u^2-a^{-2}z^2)^{-\frac{1}{2}}$.
Set $u:=\sqrt{1-a^{-2}z^2} \sin \theta$, then $\dd u =\sqrt{1-a^{-2}z^2} \cos \theta\dd \theta$, and 
$$
z(1-u^2-a^{-2}z^2)^{-\frac{1}{2}}=z (1-(1-a^{-2}z^2) \sin^2\theta -a^{-2}z^2)^{-\frac{1}{2}}  =z
(( 1-a^{-2}z^2) \cos^2 \theta)^{-\frac{1}{2}}.
$$
Putting everything together yields
\begin{align*}
I&=
\frac{N_d}{N_{d-1}} \frac{2}{\sqrt{k}}\int_0^{a}  z^{d-1} \int_{0}^{\frac{\pi}{2}} 
\frac{(( 1-a^{-2}z^2) \cos^2 \theta)^{\frac{1}{2}}}{(( 1-a^{-2}z^2) \cos^2 \theta)^{\frac{1}{2}}}
   \, \dd \theta \, \tilde{\nu}_d(\dd z)\\
   &=
\frac{N_d}{N_{d-1}} \frac{\pi}{\sqrt{k}}\int_0^{a}  z^{d-1}  \, \tilde{\nu}_d(\dd z).
\end{align*}
By the previous result, \eqref{eq:sectionform2} turns into
\begin{align}\label{eq:nd-12}
\int_0^{a} \frac{y^{d-2}}{\sqrt{y^{-2}-a^{-2}}} \, \tilde{\nu}_{d-1}(\dd y)=
\frac{N_d}{N_{d-1}} \frac{\pi}{\sqrt{k}}\int_0^{a}  z^{d-1}  \, \tilde{\nu}_d(\dd z).
\end{align}
Now consider $\tilde{\nu}_{d}((0,a))$:
\begin{align}\label{eq:last12}
\tilde{\nu}_{d}((0,a))&= \int_0^a \, \tilde{\nu}_{d}(\dd z)\nonumber  
 =  \int_0^{a} \left( -
\int_z^\infty {\frac{\dd }{\dd u}\left( \frac{1}{u^{d-1}}\right) }\, \dd u\right)
z^{d-1} 
\, \tilde{\nu}_{d}(\dd z)\nonumber  \\
&= \int_0^{\infty} \int_0^{u\wedge a} \frac{d-1}{u^d}  z^{d-1}  \, \dd u\, \tilde{\nu}_d(\dd z) \nonumber \\
&=\int_0^{\infty} \frac{d-1}{u^d} \int_0^{u\wedge a}  z^{d-1}  \, \dd u\, \tilde{\nu}_d(\dd z).
\end{align}
From this, using Equation \eqref{eq:nd-12}, we derive
\begin{align*}
 \int_0^{\infty} \frac{d-1}{u^d} \int_0^{u\wedge a}  z^{d-1}  \, \dd u\, \tilde{\nu}_d(\dd z)=
\frac{N_{d-1}}{N_d} \frac{\sqrt{k}}{\pi}\int_0^{\infty} \frac{d-1}{u^d}   \int_0^{{u\wedge a}} \frac{x^{d-2}}{\sqrt{x^{-2}-({u\wedge a})^{-2}}} \, \tilde{\nu}_{d-1}(\dd x) \, \dd u.
\end{align*}
Next, we split the integral from $0$ to $a$ and from $a$ to $\infty$. The resulting two expressions can be evaluated as follows:
\begin{align}\label{eq:last22}
\int_a^\infty \frac{d-1}{u^d}   \int_0^{a} \frac{x^{d-2}}{\sqrt{x^{-2}-a^{-2}}} \, \tilde{\nu}_{d-1}(\dd x)\, \dd u
=\frac{ 1}{a^{d-2}}   \int_0^{a} \frac{x^{d-1}}{\sqrt{a^{ 2}-x^{ 2}}} \, \tilde{\nu}_{d-1}(\dd x)
\end{align}
and 
\begin{align}\label{eq:last32}
\int_0^a \frac{d-1}{u^d}   \int_0^{{u }} \frac{x^{d-2}}{\sqrt{x^{-2}-u^{-2}}} \, \tilde{\nu}_{d-1}(\dd x) \, \dd u
=
(d-1)\int_0^a x^{d-1}   \int_x^{a} \frac{u^{-(d-1)}}{\sqrt{u^{2}-x^{2}}}\, \dd u \, \tilde{\nu}_{d-1}(\dd x).
\end{align}
Recall that $\nu((a,l_k])=\tilde{\nu}((0,\cos (\sqrt{ k}a)))$. Combining \eqref{eq:last12}, \eqref{eq:last22} and \eqref{eq:last32} gives
\begin{align}\label{eq:lastform}
\nu_d((a,l_k])=&
\frac{N_{d-1}}{N_d} \frac{\sqrt{k}}{\pi}
\left(
\frac{ 1}{\cos^{d-2} (\sqrt{ k}a)}   \int_0^{\cos (\sqrt{ k}a)}   \frac{x^{d-1}}{\sqrt{\cos^{ 2} (\sqrt{ k}a)-x^{ 2}}} \, \tilde{\nu}_{d-1}(\dd x) 
\right.\nonumber
\\
&\quad +
\left.
(d-1)\int_0^{\cos (\sqrt{ k}a)} x^{d-1}   \int_x^{\cos(\sqrt{k}a)} \frac{u^{-(d-1)}}{\sqrt{u^{2}-x^{2}}}\, \dd u \, \tilde{\nu}_{d-1}(\dd x) 
\right)\nonumber \\
=&\frac{N_{d-1}\sqrt{k} }{N_d\pi}  
\left(
\frac{ 1}{\cos^{d-2} (\sqrt{ k}a)}   \int_a^{l_k}   \frac{\cos^{d-1} (\sqrt{ k}y)}{\sqrt{\cos^{ 2} (\sqrt{ k}a)-\cos^{ 2} (\sqrt{ k}y)}} \,  \nu_{d-1}(\dd y) 
\right. \nonumber
 \\
&\quad +
\left.
(d-1)\int_a^{l_k} \cos^{d-1}   (\sqrt{ k}y) \int_{\cos (\sqrt{ k}y)}^{\cos(\sqrt{k}a)} \frac{u^{-(d-1)}}{\sqrt{u^{2}-\cos^{2} (\sqrt{ k}y)}}\, \dd u \, \nu_{d-1}(\dd y) \right).
\end{align}
Now we set $u=\frac{\cos (\sqrt{k} y)}{\cos \theta}$, {for $\theta\in(0,\arccos\left( \frac{\cos (\sqrt{k}y)}{\cos (\sqrt{k}a)}\right) )$}, so $\dd u = \frac{\cos (\sqrt{k} y)}{\cos \theta} \tan \theta\, \dd \theta$ and $u^2-\cos^2 (\sqrt{k} y)=\cos^2 (\sqrt{k} y)\tan^2 \theta $.
Then, 
\begin{align}\label{eq:sec2}
\notag &\int_{\cos (\sqrt{ k}y)}^{\cos(\sqrt{k}a)} \frac{u^{-(d-1)}}{\sqrt{u^{2}-\cos^{2} (\sqrt{ k}y)}}\, \dd u  \\
&=
\int_0^{\arccos \frac{\cos (\sqrt{ k}y)}{\cos (\sqrt{ k}a)}}
\left(\frac{\cos \theta}{\cos (\sqrt{ k}y)}\right)^{d-1} \frac{1}{\cos (\sqrt{ k}y)\tan\theta} \frac{\cos (\sqrt{ k}y)}{\cos \theta}\tan\theta\, \dd \theta.
\end{align}
Plugging this into \eqref{eq:lastform} completes the proof.
\end{proof}

Having carried ot the proofs, which we were not able to unify, we can present the following unified solution of Wicksell's corpuscle problem for all constant spaces using our convention \eqref{eq:CosConvention}. We will see in the next section that the classical Euclidean case with $k=0$ can be included as a limit as $k\downarrow 0$ or $k\uparrow 0$.

\begin{corollary}[Inversion formula for constant curvature spaces]\label{cor:inver}
Let $\MM_k^d$ be a space of constant curvature $k\neq 0$. Then, for any $a\in(0,l_k)$,  
\begin{align*}
\nu_d((a,l_k]) 
=&\frac{N_{d-1}\sqrt{k} }{N_d\pi}  
\left(
\frac{ 1}{\cos^{d-2} (\sqrt{ k}a)}   \int_a^{l_k}   \frac{\cos^{d-1} (\sqrt{ k}y)}{\sqrt{\cos^{ 2} (\sqrt{ k}a)-\cos^{ 2} (\sqrt{ k}y)}} \,  \nu_{d-1}(\dd y) 
\right.
 \\
&\quad +
\left.
(d-1)\int_a^{l_k}  \cos^{d-1}   (\sqrt{ k}y) \int_{\cos (\sqrt{ k}y)}^{\cos (\sqrt{k}a)}  \frac{{k}   u^{-(d-1)}}{ \sqrt{{k^2 } (  u^{2}-\cos^{2} (\sqrt{ k}y) }  ) }\, \dd u \,  \nu_{d-1}(\dd y) \right).
\end{align*}
\end{corollary}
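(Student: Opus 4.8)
The plan is to deduce \Cref{cor:inver} directly from the two separate results \Cref{th:inveposi} and \Cref{thm:Inversionk<0}, by checking that the single displayed formula specializes to each of them once the convention \eqref{eq:CosConvention} is adopted and the symbol $\sqrt{k}$ is read through its principal branch, so that $\sqrt{k}=i\sqrt{-k}$ when $k<0$. Since the authors explicitly state that they could not unify the two proofs, the corollary is a purely notational consolidation, and the work reduces to two sign-tracking verifications rather than a fresh derivation.

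First I would treat the case $k>0$. Here $\sqrt{k}$ is real and, on the inner integration range $u\in[\cos(\sqrt{k}y),\cos(\sqrt{k}a)]$, the radicand $u^2-\cos^2(\sqrt{k}y)$ is nonnegative, so that $\sqrt{k^2(u^2-\cos^2(\sqrt{k}y))}=k\sqrt{u^2-\cos^2(\sqrt{k}y)}$ and the factor $k/\sqrt{k^2(\cdots)}$ collapses to $1/\sqrt{u^2-\cos^2(\sqrt{k}y)}$. With this simplification the displayed formula becomes literally the intermediate expression \eqref{eq:lastform} obtained in the proof of \Cref{th:inveposi}, just before the substitution $u=\cos(\sqrt{k}y)/\cos\theta$, so nothing further is needed for $k>0$.

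Then I would treat $k<0$ using $\cos(\sqrt{k}\,\cdot)=\cosh(\sqrt{-k}\,\cdot)$. For the first summand, the radicand $\cos^2(\sqrt{k}a)-\cos^2(\sqrt{k}y)=\cosh^2(\sqrt{-k}a)-\cosh^2(\sqrt{-k}y)$ is negative on the range $y>a$, whence $\sqrt{\cos^2(\sqrt{k}a)-\cos^2(\sqrt{k}y)}=i\sqrt{\cosh^2(\sqrt{-k}y)-\cosh^2(\sqrt{-k}a)}$; together with the prefactor $\sqrt{k}=i\sqrt{-k}$ the two factors of $i$ cancel, recovering exactly the first summand of \Cref{thm:Inversionk<0}. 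For the second summand the bookkeeping is more delicate: here $u^2-\cos^2(\sqrt{k}y)=u^2-\cosh^2(\sqrt{-k}y)$ is negative, so $\sqrt{k^2(u^2-\cos^2(\sqrt{k}y))}=i(-k)\sqrt{\cosh^2(\sqrt{-k}y)-u^2}$ and hence $k/\sqrt{k^2(\cdots)}=i/\sqrt{\cosh^2(\sqrt{-k}y)-u^2}$; moreover the limits satisfy $\cos(\sqrt{k}y)=\cosh(\sqrt{-k}y)>\cosh(\sqrt{-k}a)=\cos(\sqrt{k}a)$, so the inner integral is oriented backwards and contributes a further sign. I would verify that the three sign sources, namely the prefactor $\sqrt{k}=i\sqrt{-k}$, the factor $i$ from the negative radicand, and the orientation reversal of the $u$-integral, combine to $+1$, yielding precisely the second summand of \eqref{eq:lastform2} in the proof of \Cref{thm:Inversionk<0}.

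The main obstacle will be exactly this last sign and branch bookkeeping for $k<0$: one must fix a consistent branch of the square root and confirm that the imaginary units produced by $\sqrt{k}$, by the negative radicand in $\sqrt{k^2(u^2-\cos^2(\sqrt{k}y))}$, and by the reversed integration limits cancel to leave a manifestly real and positive expression. Once this is checked, the displayed identity coincides with \Cref{thm:Inversionk<0} for $k<0$ and with \Cref{th:inveposi} for $k>0$, which completes the proof.
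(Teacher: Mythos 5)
Your proposal is correct and matches the paper's (implicit) justification: the corollary is stated without a separate proof precisely because it is the notational consolidation of \cref{thm:Inversionk<0} and \cref{th:inveposi} under the convention \eqref{eq:CosConvention}, and your branch/sign bookkeeping for $k<0$ (the factor $i$ from $\sqrt{k}=i\sqrt{-k}$, the factor $i$ from $k/\sqrt{k^2(u^2-\cosh^2(\sqrt{-k}y))}$, and the $-1$ from the reversed limits $\cos(\sqrt{k}y)>\cos(\sqrt{k}a)$ multiplying to $+1$) is exactly what is needed to recover \eqref{eq:lastform2}, while for $k>0$ the formula is literally \eqref{eq:lastform}. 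No gaps.
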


%\begin{remark}
%We can write the section formula as:
%$$
%N_{d-1}  (1-D_{d-1}(r))
%= N_d   \int_0^{l_k} (1- D_d(\beta_k(r,h)) ) \cos^{d-1}(\sqrt{k}h)  \,\dd h 
%$$
%And the ratio of the intensity parameters as:
%$$
%\frac{N_{d-1}}{N_d}
%= \int_0^{l_k} (1- D_d(h) ) \cos^{d-1}(\sqrt{k}h)  \,\dd h .
%$$
%Then, \cref{th:sectionformula} can be expressed as the following section formula:
%\begin{align*}
%   \int_0^{l_k} f(R)  \, \nu_{d-1} (\dd R) = &\left( \int_0^{l_k} (1- D_d(h) ) \cos^{d-1}(\sqrt{k}h)  \,\dd h \right)^{-1}\\
%  & \times \int_0^{l_k} \int_{-R}^R f(\alpha_k(R,h))  \cos^{d-1}(\sqrt{k}h)  \,\dd h\, \nu_{d}(\dd R) 
%\end{align*}
%\end{remark}

\section{Convergence to the Euclidean case}\label{sec:asympt}

In this section, we show that the section formula and the inversion formula for $\MM_k^d$, a space of constant curvature $k\neq 0$, converge to the classical formulas in Wicksell's corpuscle problem in $\mathbb{R}^d$ if we let $k\to 0$. For the section formulas the convergence is illustrated in Figure \ref{fig:sectional_densities_convergence}.

\begin{figure}[t]
	\centering
	\includegraphics[width=0.32\columnwidth]{SectionalDensity.pdf}\ 
	\includegraphics[width=0.32\columnwidth]{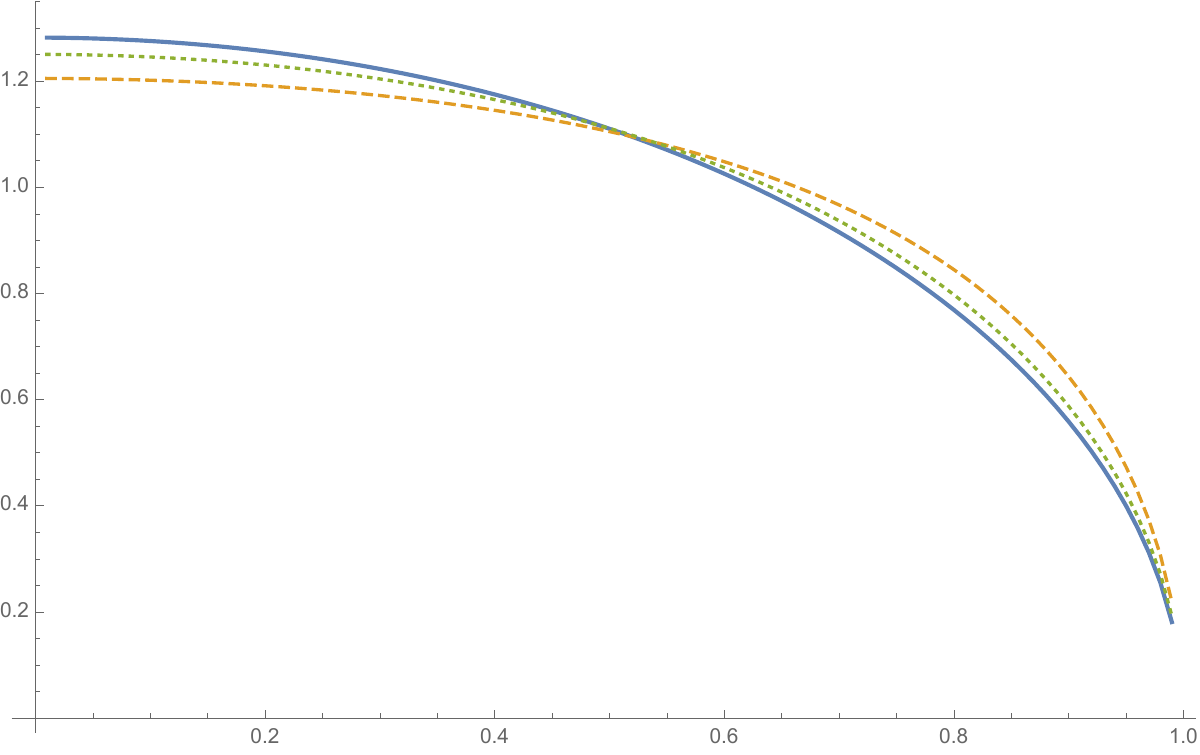}\ 
	\includegraphics[width=0.32\columnwidth]{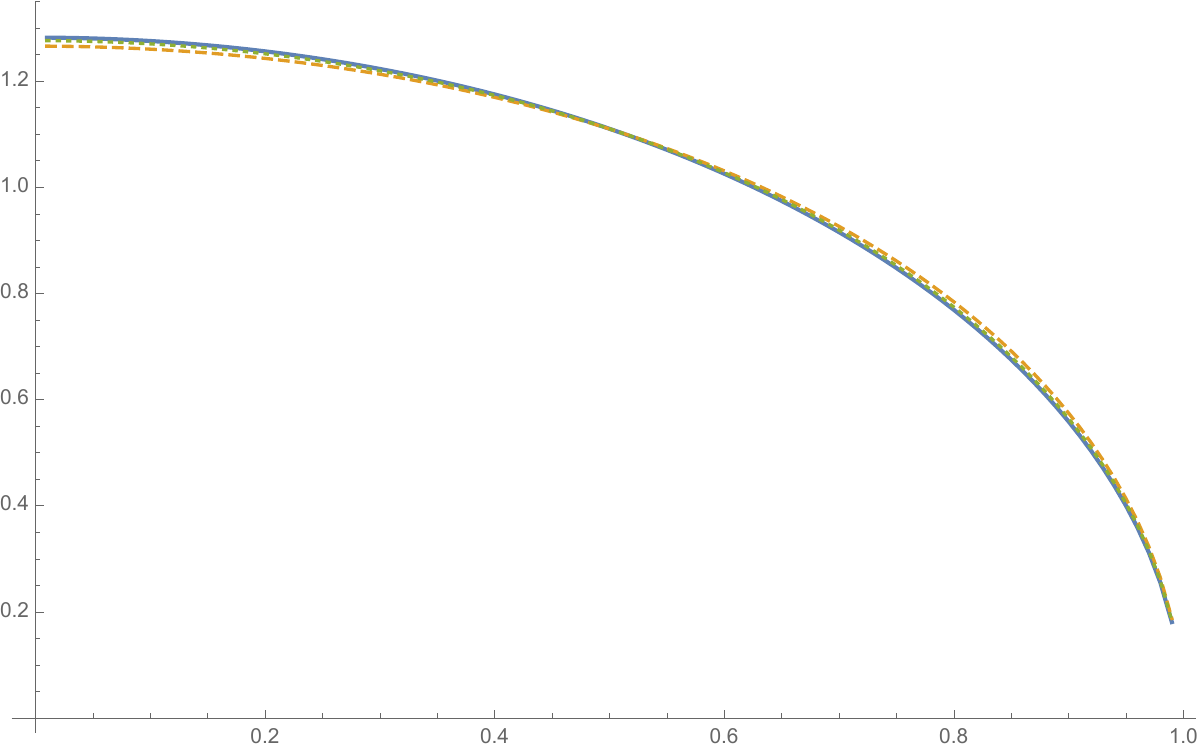}
	\caption{Densities of the measures $\nu_{d-1}$ of observed section radii when slicing a random process of balls of fixed radius \(1\) in $3$-dimensional spaces of constant curvature $k$. The solid blue curve corresponds to the choice $k=0$. The left panel shows the situation with $k=\pm 1$, the middle panel with $k=\pm 0.5$ and the left with $k=\pm 0.1$ with positive sign corresponding to dashed, orange and negative sign to dotted, green.}
	\label{fig:sectional_densities_convergence}
\end{figure}

\begin{theorem}
Let $\MM_k^d$ be a space of constant curvature $k{\neq 0}$, let $L$ be a totally geodesic hypersurface in $\MM_k^d$ and let $\nu_d$ be the distribution of the radius of the balls of the random ball process in $\MM_k^d$. Let $\nu_{d-1}^{(k)}$ be the induced radius distribution of the sectional balls within $L$. Then
$$
\nu_{d-1}^{(k)} \xrightarrow[k \to 0]{w} \nu_{d-1}^{(0)},
$$
i.e., $\nu_{d-1}^{(k)}$ converges weakly to $\nu_{d-1}^{(0)}$ as the curvature $k$ tends to $0$.
\end{theorem}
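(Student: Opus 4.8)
The plan is to establish weak convergence by testing against an arbitrary bounded continuous function and passing to the limit inside the section formula \eqref{eq:sectionformula}. Fix a bounded continuous $f\colon[0,\infty)\to\RR$. Restricting $f$ to $[0,l_k]$ and applying \eqref{eq:sectionformula} to the ball process in $\MM_k^d$ gives
\[
\int_0^{l_k} f(R)\,\nu_{d-1}^{(k)}(\dd R)
=\frac{\int_0^{l_k}\int_{-R}^R f(\alpha_k(R,h))\cos^{d-1}(\sqrt{k}h)\,\dd h\,\nu_d(\dd R)}
{\int_0^{l_k}\int_{-R}^R \cos^{d-1}(\sqrt{k}h)\,\dd h\,\nu_d(\dd R)}.
\]
The aim is to show that, as $k\to 0$ from either side, numerator and denominator converge to their Euclidean counterparts, so that the quotient tends to $\int_0^\infty f\,\dd\nu_{d-1}^{(0)}$. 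Since $f$ is an arbitrary bounded continuous function, this is exactly the assertion $\nu_{d-1}^{(k)}\xrightarrow{w}\nu_{d-1}^{(0)}$.

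First I would record the relevant pointwise limits, valid for fixed $R\ge 0$ and $|h|\le R$. A second-order expansion of $\arccos$ and $\mathrm{arcosh}$ near $1$ shows that $\alpha_k(R,h)\to\sqrt{R^2-h^2}=\alpha_0(R,h)$ as $k\to 0$, while convention \eqref{eq:CosConvention} yields $\cos^{d-1}(\sqrt{k}h)\to 1$, and $l_k\to\infty$ ensures $\mathbf 1_{\{R\le l_k\}}\to 1$. By continuity of $f$, the integrand of the numerator then converges pointwise to $f(\sqrt{R^2-h^2})$ and that of the denominator to $1$ on the region $\{(R,h):|h|\le R\}$.

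The core of the argument is justifying the interchange of limit and integral, which I would carry out by dominated convergence on $\{(R,h):|h|\le R\}$ with respect to $\nu_d(\dd R)\otimes\dd h$. For $k>0$ one has $\cos^{d-1}(\sqrt{k}h)\le 1$ throughout the range $\sqrt{k}h\in[0,\tfrac{\pi}{2}]$, so both integrands are dominated by $\|f\|_\infty\mathbf 1_{\{|h|\le R\}}$, whose integral equals $2\|f\|_\infty\rho_d<\infty$. The genuinely delicate regime is $k<0$, where $\cos^{d-1}(\sqrt{k}h)=\cosh^{d-1}(\sqrt{-k}h)$ is unbounded in $|h|$; by monotonicity it is at most $\cosh^{d-1}(\sqrt{-k}R)$ on $|h|\le R$, and for $k\in(-\delta,0)$ this is bounded by $e^{(d-1)\sqrt{\delta}\,R}$. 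A uniform dominating function therefore exists exactly when $\nu_d$ possesses a finite exponential moment of order $(d-1)\sqrt{\delta}$ for some $\delta>0$ (in particular whenever $\nu_d$ has bounded support). This is precisely the integrability condition that renders the hyperbolic intensity $N_{d-1}$ finite for $k\in(-\delta,0)$, compare Remark \ref{re:2}(b), so I would state it as a standing hypothesis. This domination step is where I expect the main difficulty to lie, since it is the only place where negative curvature forces an extra moment assumption and where the two sign regimes must be handled with genuinely different bounds.

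Granting the dominating functions, dominated convergence shows that the denominator tends to $\int_0^\infty\int_{-R}^R \dd h\,\nu_d(\dd R)=2\rho_d\in(0,\infty)$ and the numerator to $\int_0^\infty\int_{-R}^R f(\sqrt{R^2-h^2})\,\dd h\,\nu_d(\dd R)$. Their ratio equals, by \eqref{eq:sectionformula} evaluated at $k=0$ with normalisation $N_{d-1}^{(0)}/N_d=2\rho_d$, the Euclidean functional $\int_0^\infty f\,\dd\nu_{d-1}^{(0)}$. Hence $\int f\,\dd\nu_{d-1}^{(k)}\to\int f\,\dd\nu_{d-1}^{(0)}$ for every bounded continuous $f$, which is the claimed weak convergence. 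As a consistency check I would run the same computation with $f=\mathbf 1_{(r,l_k]}$, which via Theorem \ref{th:ratioestimate} recovers the convergence of the survival functions $\nu_{d-1}^{(k)}((r,l_k])\to\nu_{d-1}^{(0)}((r,\infty))$ at every $r>0$, confirming the limit independently.
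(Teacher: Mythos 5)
Your proposal is correct and follows essentially the same route as the paper: both rest on the section formula, the Taylor expansions giving $\alpha_k(R,h)\to\sqrt{R^2-h^2}$ and $\cos^{d-1}(\sqrt{k}h)\to 1$, and dominated convergence with the same dichotomy (trivial bound for $k>0$, a monotonicity/exponential bound for $k<0$). The only differences are cosmetic or clarifying: you verify weak convergence against bounded continuous test functions while the paper checks convergence of the survival functions $\nu_{d-1}^{(k)}((r,l_k])$ directly, and you state the exponential-moment condition on $\nu_d$ for $k<0$ as an explicit hypothesis, which the paper leaves implicit in the standing assumption that $N_{d-1}\in(0,\infty)$ for the curvatures under consideration.
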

\begin{proof}  
We study the behaviour of our formulas as $k$ approaches $0$. Recall from Theorem \ref{th:ratioestimate} that the distribution of the radii within the section, for both positive and negative curvature, is given by
$$
   \nu^{(k)}_{d-1}((r,l_k]) 
= \frac{N^{(k)}_d}{N^{(k)}_{d-1}} \int_r^{l_k} \int_{-\alpha_k(R,r)}^{ \alpha_k(R,r)}  \cos^{d-1}(\sqrt{k}h)  \,\dd h\, \nu_{d}(\dd R),\qquad 0<r<l_k,
$$
where $l_k = \infty$ when $k < 0$.

We use, {for fixed $x\in\RR$}, the complex Taylor series at $0$ of the cosine function
$$
{\cos  (\sqrt{ k}x)=1-\frac{(\sqrt{ k}x)^2}{2}+O(k^2)} ,
$$
so that
$$\frac{\cos  (\sqrt{ k}x)}{\cos (\sqrt{ k}y)}= 1+k \frac{y^2-x^2}{2}+O(k^2)     {,\qquad  \text{as }k\to 0 } .$$ 
Since {$\mathrm{arcos} (1-x)= \sqrt{2x}-o(\sqrt{x})$ } as $x\to 0$, applying this to {$x=k \frac{r^2-R^2}{2}+O(k^2)$ } gives
$$
{\frac{1}{\sqrt{ k}}\mathrm{arcos }\left(\frac{\cos (\sqrt{ k} R)}{\cos  (\sqrt{ k}  r  )}\right) \to 
 \sqrt{R^2-r^2}, {\qquad  \text{as } k\to 0 }. }
$$ 

Next, we examine the ratio of the intensity parameters, as the constants in the section formula depend on the curvature. Recall that
\begin{align*}
\frac{N^{(k)}_{d-1}}{N^{(k)}_{d}}
&= \int_0^{\infty} \int_{-R}^{ R} {\cos ^{d-1}(\sqrt{ k}h) } \,\dd h\, \nu_{d}(\dd R) .
\end{align*}
This expression is decreasing in $k$, {for negative $k$, and it is bounded by its value at $k=0$ for non-negative $k$}.
Thus, by the dominated convergence theorem we obtain
\begin{align*}
\lim_{k\to 0} \frac{N^{(k)}_{d-1}}{N^{(k)}_{d}}
&= 2 \int_0^{\infty} R \, \nu_{d}(\dd R)=2\rho_d.
\end{align*}
Moreover, since $\alpha_k(R, r)$ increases as ${\vert k \vert } \downarrow 0$, and {$\cos (\sqrt{ k}h)$ decreases for negative $k$ and it is bounded by its value at $k=0$ for non-negative } (and thus is bounded by $\cosh(\sqrt{-k} \sqrt{R^2 - r^2})$), we obtain	
$$\int_{-\alpha_k(R,r)}^{ \alpha_k(R,r)} { \cos^{d-1}(\sqrt{k}h)  } \,\dd h  \to 2 \sqrt{R^2-r^2}  ,\qquad\text{as } k\to 0.
$$
Additionally, for {negative $k$} we have the upper bound
\begin{align*}
  \int_r^{\infty} \int_{-\alpha_k(R,r)}^{ \alpha_k(R,r)}  \cos^{d-1}(\sqrt{k}h)  \,\dd h\, \nu_{d}(\dd R)
 \leq 
 \int_0^{\infty} \int_{-R}^{ R}  \cos^{d-1}(\sqrt{k}h)  \,\dd h\, \nu_{d}(\dd R) 
\end{align*}
which in turn converges. {For non-negative  $k$ the integral is trivially bounded.} So, applying the dominated convergence theorem again yields
\begin{align*}
 \lim_{k\to 0 } \int_r^{\infty} \int_{-\alpha_k(R,r)}^{ \alpha_k(R,r)}  \cos^{d-1}(\sqrt{k}h)  \,\dd h\, \nu_{d}(\dd R)
& =
\int_r^{\infty}  \lim_{k\to 0 }  \int_{-\alpha_k(R,r)}^{ \alpha_k(R,r)}  \cos^{d-1}(\sqrt{k}h)  \,\dd h\, \nu_{d}(\dd R)\\
&=\int_r^{\infty}  2\sqrt{R^2-r^2} \, \nu_d(\dd R).
\end{align*}
This completes the proof.
\end{proof}

\begin{theorem}
Let $\MM_k^d$ be a space of constant curvature $k\neq 0$, let $L$ be a totally geodesic hypersurface in $\MM_k^d$ and let $\nu_{d-1} $ be the radius distribution of the sectional balls within $L$. Let $\nu_{d}^{(k)}$ be the distribution of balls in $\MM_k^d$ arising from unfolding the process in $L$. Then
$$
\nu_d^{(k)} \xrightarrow[k \to 0]{w} \nu_d^{(0)}.
$$
i.e., $\nu_d^{(k)}$ converges weakly to $\nu_d^{(0)}$ as the curvature $k$ tends to $0$ .
\end{theorem}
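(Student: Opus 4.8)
The plan is to reduce the statement to the convergence of one-dimensional tail functions and then to pass to the limit inside the explicit inversion integrals of Theorems~\ref{thm:Inversionk<0} and~\ref{th:inveposi}. Since weak convergence of probability measures on $(0,\infty)$ is equivalent to convergence of the distribution functions at every continuity point of the limit, it suffices to prove that $\nu_d^{(k)}((a,\infty))\to\nu_d^{(0)}((a,\infty))$ for all but countably many $a>0$. As the inversion formulas for $k<0$ and $k>0$ were obtained separately and are analytically different, I would treat the one-sided limits $k\uparrow 0$ and $k\downarrow 0$ in parallel and verify at the end that both yield the same Euclidean expression. Note that for $k>0$ the integration range is $(a,l_k]$ with $l_k=\frac{\pi}{2\sqrt{k}}\to\infty$, so after extending each integrand by zero the two families of integrals are taken over domains exhausting $(a,\infty)$.

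The second step is the termwise asymptotics, for which I would reuse the expansions from the preceding proof. With the convention~\eqref{eq:CosConvention} one has $\cos(\sqrt{k}x)=1-\frac{kx^2}{2}+O(k^2)$, hence
\begin{align*}
\cos^2(\sqrt{k}a)-\cos^2(\sqrt{k}y)=k(y^2-a^2)(1+o(1)),\qquad \frac{\cos(\sqrt{k}y)}{\cos(\sqrt{k}a)}=1-\frac{k}{2}(y^2-a^2)(1+o(1)),
\end{align*}
together with $\arccos(1-\delta)\sim\sqrt{2\delta}$ and $\frac{\pi}{2}-\arcsin(1-\delta)\sim\sqrt{2\delta}$ as $\delta\downarrow 0$. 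Feeding these into the two summands inside the bracket, the prefactors $\cos^{-(d-2)}(\sqrt{k}a)$ and $\cos^{d-1}(\sqrt{k}y)$ tend to $1$, the square-root singularity produces a factor of order $|k|^{-1/2}$ that cancels the global $\sqrt{|k|}$, and the first summand converges to $\frac{N_{d-1}}{N_d\pi}\int_a^\infty(y^2-a^2)^{-1/2}\,\nu_{d-1}(\dd y)$. In the second summand the inner $\theta$-integral has length of order $\sqrt{|k|}\,\sqrt{y^2-a^2}$ while its integrand stays bounded near the relevant endpoint, so, multiplied by the global $\sqrt{|k|}$, this summand is $O(|k|)$ and vanishes. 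The limit is exactly the classical Wicksell tail, which I would identify by Abel-inverting the dimension-free flat section identity $N_{d-1}\nu_{d-1}((r,\infty))=2N_d\int_r^\infty\sqrt{R^2-r^2}\,\nu_d(\dd R)$, i.e.\ the $k=0$ analogue of Theorem~\ref{th:ratioestimate}.

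The third step, which I expect to be the main obstacle, is to upgrade these pointwise limits to genuine convergence via dominated convergence, uniformly in $k$. The delicate point is the integrable singularity at $y=a$: I would prove a bound $\cos^2(\sqrt{k}a)-\cos^2(\sqrt{k}y)\ge c\,|k|\,(y^2-a^2)$ valid for all small $|k|$ and all $a<y$, using the product-to-sum identity $\cos A-\cos B=2\sin\frac{A+B}{2}\sin\frac{B-A}{2}$ and monotonicity of the sine, so that after cancelling $\sqrt{|k|}$ the first integrand is majorized by $C(y^2-a^2)^{-1/2}$, a $k$-independent and $\nu_{d-1}$-integrable majorant near $a$. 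The behaviour for large $y$ is immediate for $k>0$, since $\nu_{d-1}$ is then supported in $(0,l_k]$, while for $k<0$ it requires finiteness of an exponential moment of $\nu_{d-1}$, precisely the admissibility condition noted in the remark following Theorem~\ref{thm:Inversionk<0}. Finally, the intensity ratio $N_{d-1}^{(k)}/N_d^{(k)}$ enters only as an overall constant fixed by the requirement that $\nu_d^{(k)}$ be a probability measure; its limit is the Euclidean normalization, obtained from the relation $N_{d-1}/N_d=\int_0^{l_k}\int_{-R}^R\cos^{d-1}(\sqrt{k}h)\,\dd h\,\nu_d^{(k)}(\dd R)\to 2\rho_d$ of Theorem~\ref{th:ratioestimate} combined with the preceding convergence theorem. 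Assembling the convergent first summand, the vanishing second summand and the convergent prefactor gives $\nu_d^{(k)}((a,\infty))\to\nu_d^{(0)}((a,\infty))$ at every continuity point $a$ and for both signs of $k$, which is the asserted weak convergence.
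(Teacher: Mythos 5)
Your overall strategy coincides with the paper's: reduce weak convergence to convergence of the tails $\nu_d^{(k)}((a,l_k])$, Taylor-expand the cosines in the explicit inversion formulas, identify the limit of the first summand with the Euclidean Abel inversion via dominated convergence, and show the second summand vanishes. The first summand is handled essentially as in the paper (your product-to-sum lower bound on $\cos^2(\sqrt{k}a)-\cos^2(\sqrt{k}y)$ plays the role of the paper's explicit majorants, and you correctly flag the exponential-moment requirement for $k<0$).

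The one step that does not hold as stated is your treatment of the second summand. For $k<0$ the inner integrand is $(\sin\theta)^{-(d-1)}$ on $\bigl[\arcsin\bigl(\tfrac{\cosh(\sqrt{-k}a)}{\cosh(\sqrt{-k}y)}\bigr),\tfrac{\pi}{2}\bigr]$, and for fixed $k$ and large $y$ the lower limit approaches $0$, so the integrand is \emph{not} bounded near the relevant endpoint uniformly in $y$: the inner integral grows like $\cosh^{d-2}(\sqrt{-k}y)$ for $d\ge 3$. Your claim that the summand is $O(|k|)$ is only a pointwise-in-$y$ asymptotic; integrating it against $\nu_{d-1}$ requires a $k$-independent integrable majorant, i.e.\ exactly the exponential-moment/finiteness hypothesis, which the paper supplies by observing that the inner integral is monotone in $k$ and dominating by its value at some fixed $k_0<0$ where the expression is assumed finite. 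Similarly for $k>0$ the asymptotic $\arccos\bigl(\tfrac{\cos(\sqrt{k}y)}{\cos(\sqrt{k}a)}\bigr)\sim\sqrt{k}\sqrt{y^2-a^2}$ is not uniform as $y\uparrow l_k$, and concluding $O(|k|)$ would additionally need a first moment of $\nu_{d-1}$; the paper instead uses the uniform bound by $\pi$ together with tightness of $\nu_{d-1}$ (an $\epsilon$--$N$ splitting) to show the integral itself tends to zero. With these two dominations supplied, your argument closes and agrees with the paper's.
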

\begin{proof}
Recall from \cref{cor:inver} that the measure of the distribution of radii via the unfolding process is given by
\begin{align}\label{eq:1integrals}
\nu^{(k)}_d((a,l_k]) 
=&\frac{N_{d-1}\sqrt{k} }{N_d\pi}  
\left(
\frac{ 1}{\cos^{d-2} (\sqrt{ k}a)}   \int_a^{l_k}   \frac{\cos^{d-1} (\sqrt{ k}y)}{\sqrt{\cos^{ 2} (\sqrt{ k}a)-\cos^{ 2} (\sqrt{ k}y)}} \,  \nu_{d-1}(\dd y) 
\right. \nonumber 
 \\
&\quad +
\left.
(d-1)\int_a^{l_k}  \cos^{d-1}   (\sqrt{ k}y) \int_{\cos (\sqrt{ k}y)}^{\cos (\sqrt{k}a)}  \frac{{k}   u^{-(d-1)}}{ \sqrt{{ k^2 } (  u^{2}-\cos^{2} (\sqrt{ k}y) }  ) }\, \dd u \,  \nu_{d-1}(\dd y) \right).
\end{align}
where $a\in(0,l_k)$ and $l_k<\infty$ if $k>0$. 
We will show that the first integral converges to the inversion formula in the flat space $\mathbb{R}^d$, while the second term tends to zero as the curvature approaches zero.

We begin by recalling that $\cos (\sqrt{k}x)=1-\frac{(\sqrt{k}x)^2}{2}+O(k^2)$, and similarly, $\cos^n(\sqrt{k}x)=1-\frac{n}{2}(\sqrt{k} x)^2+O(k^2)$ for $n\in\N${, as $k\to 0$}. Therefore,
$$\sqrt{\cos^{ 2} (\sqrt{ k}a)-\cos^{ 2} (\sqrt{ k}y)}=\sqrt{(\sqrt{k}y)^2-(\sqrt{k}a)^2+O(k^2)}=\sqrt{k}\sqrt{y^2-a^2}(1+O(k))\quad \text{as } k {\to} 0$$ 
and thus
$$
\frac{ \sqrt{k}}{\cos^{d-2} (\sqrt{ k}a)}     \frac{\cos^{d-1} (\sqrt{ k}y)}{\sqrt{\cos^{ 2} (\sqrt{ k}a)-\cos^{ 2} (\sqrt{ k}y)}} = {\sqrt{k}\,(1+O(k))\over(1+O(k))\sqrt{k}\,\sqrt{y^2-a^2}\,(1+O(k))} =  \frac{1}{\sqrt{y^2-a^2}}(1+O(k)).
$$
{If $k$ is negative, then the aforementioned function is decreasing as $k$ increases to $0$. We can then apply the dominated convergence theorem. If $k$ is positive, we will use the following bound. We fix $a>0$ and take $k$ so small that $a<\pi/(6\sqrt{k})$. For $y\in(a,\pi/(4\sqrt{k}))$, $\frac{\cos^{d-1} (\sqrt{ k}y)}{\sqrt{\cos^{ 2} (\sqrt{ k}a)-\cos^{ 2} (\sqrt{ k}y)}} \leq \frac{1}{\sqrt{y^2-a^2}}$ and for $y\in(\pi/(4\sqrt{k}),l_k)$, $\frac{\cos^{d-1} (\sqrt{ k}y)}{\sqrt{\cos^{ 2} (\sqrt{ k}a)-\cos^{ 2} (\sqrt{ k}y)}} \leq C$ for some constant $C>0$.} By the dominated convergence theorem we obtain
$$
\frac{ \sqrt{k}}{\cos^{d-2} (\sqrt{ k}a)}   \int_0^{\infty}\mathbf{1}_{(a,l_k)}(y)   \frac{\cos^{d-1} (\sqrt{ k}y)}{\sqrt{\cos^{ 2} (\sqrt{ k}a)-\cos^{ 2} (\sqrt{ k}y)}} \,  \nu_{d-1}(\dd y)  \to 
\int_a^\infty \frac{1}{\sqrt{y^2-a^2}}\, \nu_{d-1}(\dd y),
$$
as $k\to 0$.
 This corresponds to the inversion formula for the corpuscle problem in $\mathbb{R}^d$, {see \cite{Wicksell1}}.

Next, we show that the second integral in \eqref{eq:1integrals} vanishes as the curvature tends to zero. {We will use the expressions from \cref{thm:Inversionk<0} and \cref{th:inveposi} separately. For $k<0$, we want to bound 
$$
(d-1)\int_a^{\infty}\int_{\arcsin \left(\frac{\cosh(\sqrt{-k}a)}{\cosh(\sqrt{-k}y)} \right)}^{\frac{\pi}{2} }
    ( \sin\theta)^{-(d-1)}  \, \dd \theta \, \nu_{d-1}(\dd y) =: (d-1)\int_a^{\infty} F_k(y) \, \nu_{d-1}(\dd y)  .
$$
Observe that $\arcsin \left(\frac{\cosh(\sqrt{-k}a)}{\cosh(\sqrt{-k}y)} \right) $ increases to $\frac{\pi}{2}$ as $k\uparrow0$  and thus $F_k(y)$ tends to $0$ as $k\uparrow0$. 
If $\nu_d^{(k)}$ is finite for some $k<0$, then we can apply the dominated convergence theorem and obtain that
$$
\lim_{k\to 0} (d-1)\int_a^{\infty}\int_{\arcsin \left(\frac{\cosh(\sqrt{-k}a)}{\cosh(\sqrt{-k}y)} \right)}^{\frac{\pi}{2} }
    ( \sin\theta)^{-(d-1)}  \, \dd \theta \, \nu_{d-1}(\dd y) = 0 .
$$
 }
{For $k>0$, we will bound 
$$(d-1)\int_a^{l_k} \int_0^{\arccos \frac{\cos (\sqrt{ k}y)}{\cos (\sqrt{ k}a)}}
\cos^{d-2}\theta\, \dd \theta \, \nu_{d-1}(\dd y).$$
Observe that }
$$
\int_0^{\arccos \frac{\cos(\sqrt{k}y)}{\cos(\sqrt{k}a)}} \cos^{d-2} \theta \, \dd \theta \leq \arccos \left( \frac{\cos(\sqrt{k}y)}{\cos(\sqrt{k}a)} \right),
$$
and that $\arccos x \leq \pi$. Since $\nu_{d-1}$ is a probability measure, for any $\epsilon > 0$, there exists $N_0 > 0$ such that $\nu_{d-1}((N, \infty)) < \epsilon$, for any $N>N_0$. Hence, 
\begin{align*}
 \int_a^{l_k} \int_0^{\arccos \frac{\cos(\sqrt{k}y)}{\cos(\sqrt{k}a)}} \cos^{d-2} \theta \, \dd \theta \, \nu_{d-1}(\dd y)
&\leq \int_a^{l_k} \arccos \left( \frac{\cos(\sqrt{k}y)}{\cos(\sqrt{k}a)} \right) \nu_{d-1}(\dd y) \\
&\leq \int_a^N \arccos \left( \frac{\cos(\sqrt{k}y)}{\cos(\sqrt{k}a)} \right) \nu_{d-1}(\dd y) + \epsilon \pi.
\end{align*}
 To complete the argument, observe that, since $y \leq N$, we have
$$
\arccos \left( \frac{\cos(\sqrt{k}y)}{\cos(\sqrt{k}a)} \right) \leq \arccos \left( \frac{\cos(\sqrt{k}N)}{\cos(\sqrt{k}a)} \right) \to 0 \quad \text{as } k \downarrow 0.
$$
The proof is is completed by taking $\epsilon\to 0$.
\end{proof}

\footnotesize{
\bibliographystyle{abbrv}
\bibliography{biblio}

\begin{thebibliography}{10}

\bibitem{Solodovnikov}
D.~V. Alekseevskij, E.~B. Vinberg, and A.~S. Solodovnikov.
\newblock {\em Geometry of Spaces of Constant Curvature}, pages 1--138.
\newblock Springer Berlin Heidelberg, Berlin, Heidelberg, 1993.

\bibitem{perc}
I.~Benjamini and O.~Schramm.
\newblock Percolation in the hyperbolic plane.
\newblock {\em Journal of the American Mathematical Society}, 14(2):487--507,
  2001.

\bibitem{Cyl1}
C.~Betken, E.~Broman, A.~Gusakova, and C.~Th{\"a}le.
\newblock Percolation of fat {Poisson} cylinders in hyperbolic space.
\newblock Preprint, {arXiv}:2412.17757, 2024.

\bibitem{Hyp1}
C.~Betken, D.~Hug, and C.~Th{\"a}le.
\newblock Intersections of {Poisson} {{\(k\)}}-flats in constant curvature
  spaces.
\newblock {\em Stochastic Processes Appl.}, 165:96--129, 2023.

\bibitem{Cyl2}
E.~Broman and J.~Tykesson.
\newblock {Poisson cylinders in hyperbolic space}.
\newblock {\em Electronic Journal of Probability}, 20:1 -- 25, 2015.

\bibitem{RCM1}
J.~Chellig, N.~Fountoulakis, and F.~Skerman.
\newblock The modularity of random graphs on the hyperbolic plane.
\newblock {\em Journal of Complex Networks}, 10(1):cnab051, 2021.

\bibitem{AchilleEtAl}
M.~D'Achille, N.~Curien, N.~Enriquez, R.~Lyons, and M.~{\"U}nel.
\newblock Ideal {Poisson}-{Voronoi} tessellations on hyperbolic spaces.
\newblock Preprint, {arXiv}:2303.16831, 2025.

\bibitem{RCM3}
M.~Dickson.
\newblock Non-uniqueness {Phase} in {Hyperbolic} {Marked} {Random} {Connection}
  {Models} using the {Spherical} {Transform}.
\newblock Preprint, {arXiv}:2412.12854, 2024.

\bibitem{RCM2}
M.~Dickson and M.~Heydenreich.
\newblock Mean-field behaviour of the random connection model on hyperbolic
  space.
\newblock Preprint, {arXiv}:2505.09025, 2025.

\bibitem{Dress1992}
H.~Dress and R.-D. Reiss.
\newblock Tail behavior in {W}icksell's corpuscle problem.
\newblock {\em Probability Theory and Applications: Essays to the Memory of
  J{\'o}zsef Mogyor{\'o}di}, pages 205--220, 1992.

\bibitem{RGG2}
N.~Fountoulakis, P.~van~der Hoorn, T.~M{\"u}ller, and M.~Schepers.
\newblock {Clustering in a hyperbolic model of complex networks}.
\newblock {\em Electronic Journal of Probability}, 26:1 -- 132, 2021.

\bibitem{RGG1}
N.~Fountoulakis and J.~Yukich.
\newblock Limit theory for isolated and extreme points in hyperbolic random
  geometric graphs.
\newblock {\em Electron. J. Probab.}, 25:51, 2020.

\bibitem{Gili2024}
F.~Gili, G.~Jongbloed, and A.~van~der Vaart~and.
\newblock Adaptive and efficient isotonic estimation in {W}icksell's problem.
\newblock {\em Journal of Nonparametric Statistics}, pages 1--41, 2024.

\bibitem{Groeneboom1995}
P.~Groeneboom and G.~Jongbloed.
\newblock {Isotonic Estimation and Rates of Convergence in Wicksell's Problem}.
\newblock {\em The Annals of Statistics}, 23(5):1518 -- 1542, 1995.

\bibitem{PV2}
B.~Hansen and T.~M{\"u}ller.
\newblock {Poisson–Voronoi percolation in the hyperbolic plane with small
  intensities}.
\newblock {\em The Annals of Probability}, 52(6):2342 -- 2405, 2024.

\bibitem{PV1}
B.~T. Hansen and T.~Müller.
\newblock The critical probability for {V}oronoi percolation in the hyperbolic
  plane tends to 1/2.
\newblock {\em Random Structures \& Algorithms}, 60(1):54--67, 2022.

\bibitem{Hyp2}
F.~Herold, D.~Hug, and C.~Th{\"a}le.
\newblock Does a central limit theorem hold for the k-skeleton of {P}oisson
  hyperplanes in hyperbolic space?
\newblock {\em Probability Theory and Related Fields}, 179(3):889--968, 2021.

\bibitem{Bool2}
D.~Hug, G.~Last, and M.~Schulte.
\newblock Boolean models in hyperbolic space.
\newblock Preprint, {arXiv}:2408.03890, 2024.

\bibitem{Hyp4}
Z.~Kabluchko, D.~Rosen, and C.~Th{\"a}le.
\newblock A quantitative central limit theorem for {Poisson} horospheres in
  high dimensions.
\newblock {\em Electron. Commun. Probab.}, 29:11, 2024.

\bibitem{Hyp3}
Z.~Kabluchko, D.~Rosen, and C.~Th{\"a}le.
\newblock Fluctuations of $\lambda$-geodesic poisson hyperplanes in hyperbolic
  space.
\newblock {\em Israel Journal of Mathematics}, 2025.

\bibitem{RGG3}
D.~Krioukov, F.~Papadopoulos, M.~Kitsak, A.~Vahdat, and M.~Boguñá.
\newblock Hyperbolic geometry of complex networks.
\newblock {\em Physical review. E, Statistical, nonlinear, and soft matter
  physics}, 82:036106, 2010.

\bibitem{Mecke1980}
J.~Mecke and D.~Stoyan.
\newblock Stereological problems for spherical particles.
\newblock {\em Mathematische Nachrichten}, 96:311--317, 1980.

\bibitem{Ohser2001}
J.~Ohser and F.~Mücklich.
\newblock Statistical analysis of microstructures in materials science.
\newblock {\em Practical Metallography}, 38(9):538--539, 2001.

\bibitem{Ratcc}
J.~G. Ratcliffe.
\newblock {\em Hyperbolic Geometry}.
\newblock Springer International Publishing, Cham, 2019.

\bibitem{Mecke1987}
D.~Stoyan, W.~S. Kendal, and J.~Mecke.
\newblock {\em Stochastic Geometry and Its Applications}.
\newblock De Gruyter, Berlin, Boston, 1987.

\bibitem{RSS}
E.~Sönmez, P.~Spanos, and C.~Thäle.
\newblock Intersection probabilities for flats in hyperbolic space.
\newblock {\em Advances in Mathematics}, 479:110415, 2025.

\bibitem{Bool3}
J.~Tykesson.
\newblock The number of unbounded components in the {P}oisson {B}oolean model
  of continuum percolation in hyperbolic space.
\newblock {\em Electron. J. Probab.}, 12:no. 51, 1379--1401, 2007.

\bibitem{Bool1}
J.~Tykesson and P.~Calka.
\newblock Asymptotics of visibility in the hyperbolic plane.
\newblock {\em Advances in Applied Probability}, 45(2):332–350, 2013.

\bibitem{Wicksell1}
S.~D. Wicksell.
\newblock The corpuscle problem: A mathematical study of a biometric problem.
\newblock {\em Biometrika}, 17(1/2):84--99, 1925.

\bibitem{Wicksell2}
S.~D. Wicksell.
\newblock The corpuscle problem: Second memoir: Case of ellipsoidal corpuscles.
\newblock {\em Biometrika}, 18(1/2):151--172, 1926.

\bibitem{zahle}
M.~Zähle.
\newblock Wicksell's corpuscle problem in spherical spaces.
\newblock {\em Journal of Applied Probability}, 27(3):701--706, 1990.

\end{thebibliography}
}

\end{document}